\newtheorem{theorem}{Theorem}
\newtheorem{proposition}[theorem]{Proposition}
\newtheorem{lemma}[theorem]{Lemma}
\newtheorem{corollary}[theorem]{Corollary}
\theoremstyle{definition}
\newtheorem{df}[theorem]{Definition}
\newtheorem{example}[theorem]{Example}
\newtheorem{notation}[theorem]{Notation}
\theoremstyle{remark}
\newtheorem{remark}[theorem]{Remark}
\numberwithin{equation}{section} 
\DeclareMathOperator{\Supp}{Supp} \DeclareMathOperator{\CD}{CD} \DeclareMathOperator{\chara}{char} \DeclareMathOperator{\lspan}{span}
\begin{document}

\begin{center}
\textbf{\Large GRADINGS ON COMPOSITION SUPERALGEBRAS}
\\[1cm]
Diego Aranda-Orna\footnote{Supported
by the Spanish
Ministerio de Econom\'{\i}a y Competitividad and FEDER (MTM 2010-18370-C04-02) and by
the Diputaci\'on General de Arag\'on y el Fondo Social Europeo (Grupo de Investigaci\'on de \'Algebra)}
\\[.3cm]
Departamento de Matem\'aticas, Facultad de Ciencias,\\
Universidad de Zaragoza, 50009 Zaragoza, Spain\\
E-mail: daranda@unizar.es
\\[1cm]
\end{center}

{\noindent \small ABSTRACT. We classify up to equivalence the gradings on Hurwitz superalgebras and on symmetric composition superalgebras, over any field. Also, classifications up to isomorphism are given in case the field is algebraically closed. By grading, here we mean group grading. }

\section{INTRODUCTION AND GENERALITIES}
Gradings on composition algebras are useful to find gradings on exceptional simple Lie algebras. Gradings on Hurwitz algebras and on symmetric composition algebras were described in \cite{Eld98} and \cite{Eld09}. The aim of this article is to extend the known results of gradings on composition algebras to composition superalgebras. Hurwitz superalgebras and symmetric composition superalgebras were classified in \cite{EldOku02}.

\smallskip

We begin by giving the basic definitions and notations of gradings in algebras and superalgebras. (See \cite[Chap. 1]{EldKoc13}.)

Recall that given a set $S$, an {\it $S$-grading} on an $F$-algebra $\mathcal A$ is a vector space decomposition $\Gamma:{\mathcal A}=\bigoplus_{s\in S}{\mathcal A}^s$ such that for each $s_1,s_2\in S$ there is $s\in S$ with ${\mathcal A}^{s_1}{\mathcal A}^{s_2}\subseteq {\mathcal A}^s$. The {\it support} of the grading $\Gamma$ is the set $\Supp\Gamma:=\{s\in S:{\mathcal A}^s\neq 0\}$.

Let $\Gamma:{\mathcal A}=\bigoplus_{s\in S}{\mathcal A}^s$ and $\Gamma':{\mathcal A}=\bigoplus_{t\in T}{\mathcal A}^t$ be gradings on $\mathcal A$. We say that $\Gamma$ is a {\it refinement} of $\Gamma'$, or that $\Gamma'$ is a \textit{coarsening} of $\Gamma$, if for each $s\in S$ there is some $t\in T$ such that ${\mathcal A}^s\subseteq {\mathcal A}^t$. A grading is said to be {\it fine} if it admits no proper refinement.

Given a group $G$, a {\it $G$-grading} on an $F$-algebra $\mathcal A$ is a vector space decomposition $\Gamma:{\mathcal A}=\bigoplus_{g\in G}{\mathcal A}^g$, where ${\mathcal A}^g{\mathcal A}^h\subseteq {\mathcal A}^{gh}$ for each $g,h\in G$. Then,
given an $S$-grading $\Gamma:{\mathcal A}=\bigoplus_{s\in S}{\mathcal A}^s$ by a set $S$, we define the {\it universal grading group} as the group $G$ generated by $\Supp\Gamma$ with the relations $s_1s_2=s_3$ when $0\neq {\mathcal A}^{s_1}{\mathcal A}^{s_2}\subseteq {\mathcal A}^{s_3}$ (it is clear that then $\Gamma$ induces a $G$-grading on $\mathcal A$). If the original grading is a grading by a group $H$, then there is a natural homomorphism $G\rightarrow H$ which is the identity on the support of the grading.

Given a $G$-grading $\Gamma:{\mathcal A}=\bigoplus_{g\in G}{\mathcal A}^g$ and a group homomorphism $\alpha:G\rightarrow H$, the $H$-grading $\Gamma':{\mathcal A}=\bigoplus_{h\in H}{\mathcal A}^h$, where ${\mathcal A}^h=\bigoplus_{g\in\alpha^{-1}(h)}{\mathcal A}^g$, is a coarsening of $\Gamma$. It is called the grading induced from $\Gamma$ by $\alpha$. Any grading by a group $H$ is induced from a fine (group) grading by a homomorphism from the universal grading group of the fine grading into $H$.

Given two gradings $\Gamma:{\mathcal A}=\bigoplus_{s\in S}{\mathcal A}^s$ and $\Gamma':{\mathcal B}=\bigoplus_{t\in T}{\mathcal B}^t$, an isomorphism $\varphi:{\mathcal A}\rightarrow{\mathcal B}$ of algebras is said to be an {\it equivalence of graded algebras} if for each $s\in S$ there is $t\in T$ such that $\varphi({\mathcal A}^s)={\mathcal B}^t$. Notice that the assignment $t:=\alpha(s)$ gives an isomorphism among the corresponding universal grading groups.

If the two gradings above are group gradings by the same group: $\Gamma:{\mathcal A}=\bigoplus_{g\in G}{\mathcal A}^g$ and $\Gamma':{\mathcal B}=\bigoplus_{g\in G}{\mathcal B}^g$ and the isomorphism above $\varphi$ satisfies $\varphi({\mathcal A}^g)={\mathcal B}^g$ for any $g\in G$, then $\varphi$ is called an \textit{isomorphism of $G$-graded algebras}.

\smallskip

A {\it superalgebra} is just a $\mathbb{Z}_2$-graded algebra. Given a superalgebra, we will write its $\mathbb{Z}_2$-grading with subindex notation, ${\mathcal A}={\mathcal A}_{\bar 0}\oplus{\mathcal A}_{\bar 1}$, and this will be called the \textit{main $\mathbb{Z}_2$-grading} (or just the {\it main grading}) in order to distinguish it from other possible $\mathbb{Z}_2$-gradings. The homogeneous components ${\mathcal A}_{\bar0}$ and ${\mathcal A}_{\bar1}$ are called the {\it even} and {\it odd components}, and their elements are called {\it even} and {\it odd} respectively. Note that an isomorphism of superalgebras is just an isomorphism of graded algebras.

Let $\Gamma : {\mathcal A} = \bigoplus_{g\in G} {\mathcal A}^g$ be a grading on the superalgebra ${\mathcal A}={\mathcal A}_{\bar 0}\oplus{\mathcal A}_{\bar 1}$ considered as an algebra. We will say that $\Gamma$ is a {\it grading on (the superalgebra)} $\mathcal A$ (or that $\Gamma$ is compatible with the main grading) if
$$
{\mathcal A}^g=({\mathcal A}^g \cap {\mathcal A}_{\bar 0}) \oplus ({\mathcal A}^g \cap {\mathcal A}_{\bar 1})
$$
for all $g\in G$. We use the notation ${\mathcal A}^g_i={\mathcal A}^g \cap {\mathcal A}_i$ for any $g\in G$, $i=\bar0,\bar1$. An equivalence (resp. isomorphism) of graded algebras which is also an isomorphism of superalgebras is called an {\it equivalence} (resp. {\it isomorphism}) of graded superalgebras.

The {\it trivial grading} on a superalgebra ${\mathcal A}$ is given by ${\mathcal A}={\mathcal A}^e$ ($G=\{e\}$). (Notice that other authors may refer to the main grading as the trivial grading, and always consider the homogeneous components contained in ${\mathcal A}_{\bar0}$ or ${\mathcal A}_{\bar1}$; but this is not our case).

From now on, only group gradings will be considered, unless otherwise stated. Therefore, the term fine grading will refer to a grading that admits no proper refinement in the class of group gradings.


\bigskip

We will recall now some well known facts about Cayley algebras, such as the existence of a ``canonical'' basis in the split Cayley algebra, and the Cayley-Dickson process. The kind of arguments used to show the existence of these bases will be explained in this section, since they will be used in several proofs of the following sections without explaining again all the details.

\smallskip

Recall that a Hurwitz algebra $C$ is a unital algebra over a field $F$ endowed with a regular (definition as in \cite[p. xix]{KMRT}) quadratic form $q:C\rightarrow F$ which is multiplicative: $q(xy)=q(x)q(y)$ for any $x,y\in C$. For the basic facts about Hurwitz algebras the reader may consult \cite[Chap. 2]{ZSSS} or \cite[Chap. 8]{KMRT}. The dimension of any Hurwitz algebra is restricted to $1$, $2$, $4$ or $8$. The $4$-dimensional Hurwitz algebras are the quaternion algebras, and the $8$-dimensional Hurwitz algebras are called Cayley algebras or octonion algebras.

Denote also by $q$ the polar form of the norm: $q(x,y)=q(x+y)-q(x)-q(y)$. Any element of a Hurwitz algebra satisfies the quadratic equation $x^2-q(x,1)x+q(x)1=0$, which can be written as $x\bar x=\bar x x=q(x)1$, where $\bar x=q(x,1)1-x$ (the conjugate of $x$). The map $x\mapsto \bar x$ is an involution and satisfies $q(xy,z)=q(y,\bar xz)=q(x,z\bar y)$ for any $x,y,z$.

A {\it split Hurwitz algebra} is a Hurwitz algebra $C$ with a nonzero isotropic element: $0\neq x\in C$ such that $q(x)=0$. Note that any Hurwitz algebra of dimension $\geq2$ over an algebraically closed field is split. Let $C$ be a split Cayley algebra and let $a$ be a nonzero isotropic element. In that case, we can take $b\in C$ such that $q(a,\bar b)=1$ ($q$ is regular). Let $e_1:=ab$. We have $q(e_1)=0$ and $q(e_1,1)=1$, so $e_1^2=e_1$. Let $e_2:=\bar{e_1}=1-e_1$, so $q(e_2)=0$, $e_2^2=e_2$, $e_1e_2=0=e_2e_1$ and $q(e_1,e_2)=q(e_1,1)=1$. Then $K=Fe_1\oplus Fe_2$ is a composition subalgebra of $C$.

For any $x\in K^\perp$, $xe_1+\overline{xe_1}=q(xe_1,1)1=q(x,\bar e_1)1=q(x,e_2)1=0$. Hence $xe_1=-\bar e_1 \bar x=e_2 x$, and we get $xe_1=e_2x$, $xe_2=e_1x$.
Also, $x=1x=e_1x+e_2x$, and $e_2(e_1x)=(1-e_1)(e_1x)=((1-e_1)e_1)x=0=e_1(e_2x)$. Therefore, $K^\perp=U\oplus V$, with
\begin{align*}
U=\{x\in C:e_1x=x=xe_2, e_2x=0=xe_1\}=(e_1C)e_2, \\
V=\{x\in C:e_2x=x=xe_1, e_1x=0=xe_2\}=(e_2C)e_1.
\end{align*}
For any $u\in U$, $q(u)=q(e_1u)=q(e_1)q(u)=0$, so $U$ and $V$ are isotropic subspaces of $C$. Since $q$ is regular, $U$ and $V$ are paired by $q$ and $\dim U=\dim V=3$. Take $u_1,u_2\in U$ and $v\in V$. Then,
\begin{align*}
q(u_1u_2,K)\subseteq & q(u_1,Ku_2)\subseteq q(U,U)=0, \\
q(u_1u_2,v)=q(u_1u_2,e_2v) & =-q(e_2u_2,u_1v)+q(u_1,e_2)q(u_2,v)=0.
\end{align*}
Hence $U^2$ is orthogonal to $K$ and $V$, so $U^2\subseteq V$. Also $V^2\subseteq U$. Besides,
\begin{align*}
q(U,UV)\subseteq q(U^2,V)\subseteq q(V,V)=0, \\
q(UV,V)\subseteq q(U,V^2)\subseteq q(U,U)=0,
\end{align*}
so $UV+VU\subseteq K$. Moreover, $q(UV,e_1)\subseteq q(U,e_1V)=0$, so that $UV\subseteq Fe_1$ and $VU\subseteq Fe_2$. More precisely, for $u\in U$ and $v\in V$, $q(uv,e_2)=-q(u,e_2v)=-q(u,v)$, so that $uv=-q(u,v)e_1$, and $vu=-q(u,v)e_2$. Then, the decomposition $C=K\oplus U\oplus V$ is a $\mathbb{Z}_3$-grading on $C$.

For linearly independent elements $u_1,u_2\in U$, take $v\in V$ with $q(u_1,v)\neq0=q(u_2,v)$. Then $(u_1u_2)v=-(u_1v)u_2=q(u_1,v)u_2\neq0$, and so $U^2\neq0$.
Moreover, the trilinear map 
\begin{align*}
U\times U\times U & \rightarrow F \\ 
(x,y,z) & \mapsto q(xy,z),
\end{align*}
is alternating (for any $x\in U$, $q(x)=0=q(x,1)$, so $x^2=0$ and hence $q(x^2,z)=0$; but $q(xy,y)=-q(x,y^2)=0$ too).

Take a basis $\{u_1,u_2,u_3\}$ of $U$ with $q(u_1u_2,u_3)=1$ (this is always possible because $q(U^2,U)\neq0$ since $q$ is regular). Then $\{v_1=u_2u_3, v_2=u_3u_1, v_3=u_1u_2\}$ is the dual basis in $V$ relative to $q$. We will say that $\{e_1,e_2,u_1,u_2,u_3,v_1,v_2,v_3\}$ is a {\it canonical basis} of the split Cayley algebra $C$, and its multiplication table is:

\begin{center}
\begin{tabular}{c||cc|ccc|ccc|}
 & $e_1$ & $e_2$ & $u_1$ & $u_2$ & $u_3$ & $v_1$ & $v_2$ & $v_3$ \\
 \hline\hline
 $e_1$ & $e_1$ & 0 & $u_1$ & $u_2$ & $u_3$ & 0 & 0 & 0 \\
 $e_2$ & 0 & $e_2$ & 0 & 0 & 0 & $v_1$ & $v_2$ & $v_3$ \\
 \hline
 $u_1$ & 0 & $u_1$ & 0 & $v_3$ & $-v_2$ & $-e_1$ & 0 & 0 \\
 $u_2$ & 0 & $u_2$ & $-v_3$ & 0 & $v_1$ & 0 & $-e_1$ & 0 \\
 $u_3$ & 0 & $u_3$ & $v_2$ & $-v_1$ & 0 & 0 & 0 & $-e_1$ \\
 \hline
 $v_1$ & $v_1$ & 0 & $-e_2$ & 0 & 0 & 0 & $u_3$ & $-u_2$ \\
 $v_2$ & $v_2$ & 0 & 0 & $-e_2$ & 0 & $-u_3$ & 0 & $u_1$ \\
 $v_3$ & $v_3$ & 0 & 0 & 0 & $-e_2$ & $u_2$ & $-u_1$ & 0 \\
 \hline
\end{tabular}
\end{center}

A canonical basis determines a $\mathbb{Z}^2$-grading on $C$, where $\deg(u_1)=(1,0)=-\deg(v_1)$ and $\deg(u_2)=(0,1)=-\deg(v_2)$, which is called the {\it Cartan grading} (see \cite{EldKoc12}). On the split quaternion algebra, we have the canonical basis given by $\{e_1, e_2, u_1, v_1\}$, with the multiplication as in the table above. On the $2$-dimensional split Hurwitz algebra, the canonical basis is given by the orthogonal idempotents $\{e_1,e_2\}$.

\begin{notation}\label{CDdef} If $C$ is a Hurwitz algebra, with norm $q$, obtained by the Cayley-Dickson doubling process (see \cite[Chap. 2]{ZSSS}), then there is a Hurwitz subalgebra $Q$ and an element $u\in C$ such that $q(u)=\alpha\neq0$ and $C=Q\oplus Qu$. The multiplication is then given by
\begin{equation}
(a+bu)(c+du)=(ab-\alpha\bar d b)+(da+b\bar c)u,
\end{equation}
for any $a,b,c,d\in Q$. We will write then $C=\CD(Q,\alpha)$. Then $C$ becomes a superalgebra with $C_{\bar 0}=Q$ and $C_{\bar 1}=Qu$. We will refer to this superalgebra as the superalgebra $\CD(Q,\alpha)$.
\end{notation}


\begin{remark}\label{groupisabelian} 
We will see that for all the $G$-gradings studied on this paper, when $G$ is generated by $\Supp\Gamma$, then $G$ is abelian. This is already known for Hurwitz algebras and symmetric composition algebras (see \cite{Eld98} and \cite{Eld09}), so this is true for the superalgebras defined on them (for instance, for the superalgebras $\CD(Q,\alpha)$), and this will be checked too for the other Hurwitz superalgebras and symmetric composition superalgebras. Hence, if $\Gamma$ is a $G$-grading on one of these types of superalgebras, its support $\Supp\Gamma$ generates an abelian subgroup of $G$. We may always assume that $G$ is generated by $\Supp\Gamma$. So, in the classifications up to equivalence or up to isomorphism of the gradings in this paper, only abelian groups will be considered.
\end{remark}

\bigskip

Classifications of the gradings up to equivalence over any field, and up to isomorphism over an algebraically closed field, are given for Hurwitz superalgebras in Section 2, and for symmetric composition superalgebras in Section 3.

\bigskip

This work is based on the author's Master Thesis, written under the supervision of Alberto Elduque. I would like to thank both A. Elduque and the referee, for their so many suggestions and corrections.

\bigskip

\section{GRADINGS ON HURWITZ SUPERALGEBRAS}

The goal of this section is to classify gradings on Hurwitz superalgebras, up to equivalence over any field, and up to isomorphism in the algebraically closed case. First of all, we recall some definitions from \cite{EldOku02}.

\begin{df}
A {\it quadratic superform} on a $\mathbb{Z}_2$-graded vector space (or {\it superspace}) $V=V_{\bar0}\oplus V_{\bar1}$ over a field $F$ is a pair $q=(q_{\bar0},b)$ where
\begin{enumerate}
\item[i)] $q_{\bar0}:V_{\bar0}\rightarrow F$ is a usual quadratic form,
\item[ii)] $b:V\times V\rightarrow F$ is a supersymmetric even bilinear form. That is, $b|_{V_{\bar0}\times V_{\bar0}}$ is symmetric, $b|_{V_{\bar1}\times V_{\bar1}}$ is alternating and $b(V_{\bar0},V_{\bar1})=b(V_{\bar1},V_{\bar0})=0$.
\item[iii)] $b|_{V_{\bar0}\times V_{\bar0}}$ is the polar of $q_{\bar0}$. That is, $b(x_{\bar0},y_{\bar0})=q_{\bar0}(x_{\bar0}+y_{\bar0})-q_{\bar0}(x_{\bar0}) -q_{\bar0}(y_{\bar0})$ for any $x_{\bar0},y_{\bar0}\in V_{\bar0}$.
\end{enumerate}
The quadratic superform $q=(q_{\bar0},b)$ is said to be {\it regular} if $q_{\bar0}$ is regular and the alternating form $b|_{V_{\bar1}\times V_{\bar1}}$ is nondegenerate ($q_{\bar0}$ regular as in definition in \cite[p. xix]{KMRT}). Note that this implies that $b$ is nondegenerate unless $\chara F=2$ and $\dim V_{\bar0}=1$ (and they are equivalent facts if $\chara F\neq2$).
\end{df}

\begin{df}\label{dfcompsuper} 
A superalgebra $C=C_{\bar0}\oplus C_{\bar1}$ over a field $F$, endowed with a regular quadratic superform $q=(q_{\bar0},b):C\rightarrow F$ (called the {\it norm}), is said to be a {\it composition superalgebra} in case
\begin{enumerate}
\item[i)] $q_{\bar0}(x_{\bar0}y_{\bar0}) = q_{\bar0}(x_{\bar0})q_{\bar0}(y_{\bar0})$
\item[ii)] $b(x_{\bar0}y,x_{\bar0}z) = q_{\bar0}(x_{\bar0})b(y,z)=b(yx_{\bar0},zx_{\bar0})$
\item[iii)] $b(xy,zt)+(-1)^{|x||y|+|x||z|+|y||z|}b(zy,xt) = (-1)^{|y||z|}b(x,z)b(y,t)$
\end{enumerate}
for any $x_{\bar0},y_{\bar0}\in C_{\bar0}$ and homogeneous elements $x,y,z,t\in C$. (Here $|x|$ means the parity of the homogeneous element $x$).

The unital composition superalgebras are called {\it Hurwitz superalgebras} (note that $C_{\bar0}$ is a Hurwitz algebra in this case); also, $\bar x = b(x,1)1 - x$ is called the {\it canonical involution} of the Hurwitz superalgebra. On the other hand, the composition superalgebras satisfying $b(xy,z)=b(x,yz)$ for any $x,y,z$ are called {\it symmetric composition superalgebras}.
\end{df}

\begin{example}
The superalgebra $B(1,2)$ (\cite{Shestakov}):\\
Let $F$ be a field of characteristic $3$ and $V$ a $2$-dimensional vector space over $F$ with an alternating nondegenerate form $(\cdot,\cdot)$.
Consider the superspace $$B(1,2)=F1 \oplus V$$ with $B(1,2)_{\bar 0}=F1, \, B(1,2)_{\bar 1}=V$, and supercommutative multiplication given by $1x=x1=x$ and $uv=(u,v)1$ for all $x\in B(1,2)$ and $u,v\in V$. $B(1,2)$ is a Hurwitz superalgebra (\cite{EldOku02}) with the norm given by
$q_{\bar 0}(1)=1,\, b(1,V)=0$ and $b(u,v)=(u,v)$ for all $u,v\in V$.
\end{example}

\begin{example}
The superalgebra $B(4,2)$ (\cite{Shestakov}):\\
Let $V$ be as above (so $\chara F=3$). Consider ${\rm End}_F(V)$ with the symplectic involution
$\varphi\mapsto{\bar\varphi}:={\rm tr}(\varphi)1-\varphi$ for all $\varphi\in {\rm End}_F(V)$.
Define $$B(4,2)={\rm End}_F(V) \oplus V$$ with $B(4,2)_{\bar 0}={\rm End}_F(V)$ and $B(4,2)_{\bar 1}=V$.
The multiplication is given by
\begin{enumerate}
\item[$\bullet$] the composition of maps in ${\rm End}_F(V)$,
\item[$\bullet$] $v\cdot\varphi=\varphi(v)={\bar\varphi}\cdot v$ for all $\varphi\in {\rm End}_F(V), \, v\in V$,
\item[$\bullet$]$u\cdot v=(-,u)v\in {\rm End}_F(V) \, (w\mapsto(w,u)v)$ for all $u,v\in V$.
\end{enumerate}
$B(4,2)$ is a Hurwitz superalgebra (\cite{EldOku02}) with the norm given by $q_{\bar 0}(\varphi)={\rm det}(\varphi)$ for all $\varphi\in{\rm End}_F(V)$,
$b({\rm End}_F(V),V) = 0 = b(V,{\rm End}_F(V))$ and $b(u,v)=(u,v)$ for all $u,v\in V$.
\end{example}

\medskip

Notice that the superalgebra $\CD(Q,\alpha)$, defined as in Notation \ref{CDdef}, is a Hurwitz superalgebra if $\chara F=2$ (as proved in \cite[Example 2.8]{EldOku02}), with quadratic superform $(q_{\bar0}, b)$, where $q_{\bar0}$ is the restriction of the norm $q$ of $\CD(Q,\alpha)$ to $Q$ and $b$ is the polar form of $q$. Note that $q$ is the orthogonal sum $q_{\bar0}\perp q_{\bar1}$, where $q_{\bar1}(xu)=-\alpha q_{\bar0}(x)$ for all $x\in Q$.

Recall from \cite[Theorem 3.1]{EldOku02} the following classification of the Hurwitz superalgebras:
\begin{theorem}
Let $C$ be a Hurwitz superalgebra over a field $F$. Then either:
\begin{enumerate}
\item[\textnormal{i)}] $C_{\bar1}=0$, or
\item[\textnormal{ii)}] $\chara F=3$ and $C$ is isomorphic either to $B(1,2)$ or to $B(4,2)$, or
\item[\textnormal{iii)}] $\chara F=2$ and $C$ is isomorphic to a superalgebra $\CD(Q,\alpha)$ for a Hurwitz algebra $Q$ of dimension 2 or 4 and a nonzero scalar $\alpha$.
\end{enumerate}
\end{theorem}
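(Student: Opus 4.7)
The plan is to deduce the structure by case analysis on $d := \dim_F C_{\bar 0}$, using that the linearizations of (i)--(iii) in Definition~\ref{dfcompsuper} sharply constrain both $C_{\bar 0}$ and its action on $C_{\bar 1}$. Since $1\in C_{\bar 0}$ and (i) says $q_{\bar 0}$ is multiplicative on $C_{\bar 0}$, the even part $C_{\bar 0}$ is itself a Hurwitz algebra, and hence $d\in\{1,2,4,8\}$. If $C_{\bar 1}=0$ we are in case (i), so from now on assume $C_{\bar 1}\neq 0$.

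The first step is to linearize (ii) to obtain
\begin{equation*}
b(x_{\bar 0}u,\,y_{\bar 0}v)+b(y_{\bar 0}u,\,x_{\bar 0}v)=b(x_{\bar 0},y_{\bar 0})\,b(u,v)
\end{equation*}
for $x_{\bar 0},y_{\bar 0}\in C_{\bar 0}$ and $u,v\in C_{\bar 1}$ (with an analogous identity for the right action), so that the left and right multiplications $L_{x_{\bar 0}}$ and $R_{x_{\bar 0}}$ are similarities of the symplectic space $(C_{\bar 1},b|_{C_{\bar 1}})$ with multiplier $q_{\bar 0}(x_{\bar 0})$. Specializing (iii) to the four pure-parity combinations, in particular to four odd arguments where (iii) becomes
\begin{equation*}
b(xy,zt)-b(zy,xt)=-b(x,z)\,b(y,t),
\end{equation*}
I would extract a formula expressing $uv\in C_{\bar 0}$ in terms of $b|_{C_{\bar 1}}$ and the action of $C_{\bar 0}$.

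The case analysis then proceeds as follows. When $d=1$, $uv\in F1$ for all $u,v\in C_{\bar 1}$; evaluating the all-odd form of (iii) on a symplectic pair $\{u,v\}\subseteq C_{\bar 1}$ together with $b(1,1)=2$ yields a numerical identity that forces $\chara F=3$ and $\dim C_{\bar 1}=2$, and pinning down the multiplication scalar recovers $B(1,2)$. If $\chara F=2$, the bimodule identities combined with the fact that $b|_{C_{\bar 1}}$ is the polar of a quadratic form show that $C_{\bar 1}=C_{\bar 0}u$ is free of rank one; writing $\alpha=-q_{\bar 1}(u)$, the products extracted from (iii) reproduce the Cayley--Dickson rule of Notation~\ref{CDdef}, so $C\cong\CD(C_{\bar 0},\alpha)$ with $d\in\{2,4\}$. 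If $\chara F\neq 2$, the alternating character of $b|_{C_{\bar 1}}$ obstructs $C_{\bar 1}$ from being free over $C_{\bar 0}$, and the symplectic similarity structure identifies $C_{\bar 1}$ with the natural module of $\mathrm{End}_F(V)$; a dimension count then forces $d=4$, $\chara F=3$, and $C\cong B(4,2)$.

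The main obstacle is ruling out $d=8$ with $C_{\bar 1}\neq 0$, and I expect this to be where the bulk of the work lies. The strategy is to use the linearized forms of (ii) and (iii) to compute the associator $(x_{\bar 0},y_{\bar 0},u)$ for $x_{\bar 0},y_{\bar 0}\in C_{\bar 0}$, $u\in C_{\bar 1}$; the similarity property forces $L_{x_{\bar 0}}L_{y_{\bar 0}}$ to equal $L_{x_{\bar 0}y_{\bar 0}}$ up to controlled error, and comparison with the Moufang relations inherited from the Cayley algebra $C_{\bar 0}$ shows that every $x_{\bar 0}\in C_{\bar 0}$ would have to lie in the nucleus of its action on $C_{\bar 1}$. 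Since the nucleus of a Cayley algebra is $F1$, this collapses the action and contradicts the similarity multiplier $q_{\bar 0}(x_{\bar 0})$ for non-scalar $x_{\bar 0}$, giving $C_{\bar 1}=0$. The same numerical comparisons rule out the residual characteristic combinations and complete the classification.
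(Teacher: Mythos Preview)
The paper does not prove this statement: it is quoted as \cite[Theorem~3.1]{EldOku02} and used as input for everything that follows. There is therefore no in-paper argument to compare your sketch against.

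That said, your outline has the right skeleton (case split on $d=\dim C_{\bar 0}\in\{1,2,4,8\}$ and analysis of the linearized identities), and this is indeed how the original proof in \cite{EldOku02} is organized. Two places in your sketch are genuinely loose, though. First, the case organization blurs the dimension split and the characteristic split: you still need, for each $d\in\{2,4\}$, an explicit mechanism that forces $\chara F\in\{2,3\}$ and distinguishes the two (for example, in $\chara F\neq 2$ the requirement that $b|_{C_{\bar 1}}$ be \emph{alternating}, not merely symmetric, is what kills the Cayley--Dickson picture; your sentence ``$b|_{C_{\bar 1}}$ is the polar of a quadratic form'' assumes the conclusion). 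Second, your $d=8$ argument does not work as stated: the regular module of a Cayley algebra \emph{is} a bimodule on which left and right multiplications are similitudes with multiplier $q_{\bar 0}$, so ``nucleus $=F1$'' alone cannot give a contradiction. The real obstruction comes from a specific mixed-parity instance of (iii); taking $x,t\in C_{\bar 1}$ and $y,z\in C_{\bar 0}$ and unwinding through the Cayley--Dickson formulas reduces the identity to the vanishing of the associator $(z,y,\cdot)$ in $C_{\bar 0}$, which fails exactly when $C_{\bar 0}$ is an octonion algebra. Making that computation explicit is what actually eliminates $d=8$.
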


\begin{remark} 
It is clear that in case i) the gradings as a superalgebra coincide with the gradings as an algebra on the Hurwitz algebra $C=C_{\bar0}$, which are well known (see \cite{Eld98}). It remains to study the other cases.
\end{remark}

\begin{proposition}
Let $C$ be a Hurwitz superalgebra with quadratic superform $q=(q_{\bar0}, b)$ and a grading $C=\bigoplus_{g\in G} C^g$.
If $g,h\in G$ and $gh\neq e$, then $b(C^g,C^h)=0$. In consequence, if $C^g\neq0$, then $C^{g^{-1}}\neq0$, and the subspaces $C^g$ and $C^{g^{-1}}$ are paired by $b$.
\end{proposition}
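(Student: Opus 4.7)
The plan is to leverage that $1 \in C^e$ (standard for any group grading on a unital algebra), the Hurwitz quadratic equation $x^2 = b(x,1)\, x - q_{\bar 0}(x) 1$ satisfied on $C_{\bar 0}$, and a well-chosen specialization of axiom iii) of Definition \ref{dfcompsuper}. The first step is to show that $b(x, 1) = 0$ for every super-homogeneous $x \in C^g$ with $g \neq e$ (super-homogeneity being allowed by the compatibility of $\Gamma$ with the main grading). If $|x| = \bar 1$ this is immediate, because $b$ vanishes between even and odd. If $|x| = \bar 0$, then $x^2 \in C^{g^2}$, while the right-hand side $b(x,1) x - q_{\bar 0}(x) 1$ decomposes as $b(x,1) x \in C^g$ plus $-q_{\bar 0}(x) 1 \in C^e$; since $g \neq e$ forces $g \neq g^2$, the $C^g$-projection of the equation yields $b(x,1) x = 0$, hence $b(x, 1) = 0$.

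The key step is to specialize axiom iii) of Definition \ref{dfcompsuper} by setting $y = z = 1$. Since $|y| = |z| = 0$ every sign factor is trivial, and after using supersymmetry to identify $b(1, \cdot)$ with $b(\cdot, 1)$ the identity collapses to
\[
b(x, t) + b(xt, 1) = b(x, 1)\, b(t, 1)
\]
for all super-homogeneous $x, t \in C$. Given $x \in C^g$, $t \in C^h$ super-homogeneous with $gh \neq e$, the right-hand side vanishes because at least one of $g, h$ differs from $e$, making the corresponding factor $b(x,1)$ or $b(t,1)$ zero by the first step; and $b(xt, 1) = 0$ for the same reason, applied to $xt \in C^{gh}$. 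Thus $b(x, t) = 0$, and $b(C^g, C^h) = 0$ follows by bilinearity.

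For the consequence, given a nonzero super-homogeneous $x \in C^g$, nondegeneracy of $b$ produces $y \in C$ with $b(x, y) \neq 0$; decomposing $y = \sum_h y_h$, every summand $b(x, y_h)$ with $h \neq g^{-1}$ vanishes by what we just proved, so $y_{g^{-1}} \neq 0$ and $b$ pairs $C^g$ with $C^{g^{-1}}$. The one subtlety is the characteristic-$2$ exceptional case with $\dim C_{\bar 0} = 1$, in which $b$ is degenerate on $C_{\bar 0}$; but there, for $g \neq e$, compatibility forces $C^g \subseteq C_{\bar 1}$ where $b$ remains nondegenerate, so the same argument applies.
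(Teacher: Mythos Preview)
Your proof is correct, and it differs from the paper's in two respects worth noting. First, the paper dispenses with the even--even case by citing the known result for Hurwitz algebras, whereas you give a self-contained argument via the quadratic relation $x^2 = b(x,1)x - q_{\bar 0}(x)1$ and the projection onto $C^g$; this makes your proof independent of external references at the cost of a few extra lines. Second, the paper specializes axiom~iii) of Definition~\ref{dfcompsuper} at $z = t = 1$ (obtaining $b(xy,1) - b(y,x) = b(x,1)b(y,1)$ for odd $x,y$), while you specialize at $y = z = 1$ (obtaining $b(x,t) + b(xt,1) = b(x,1)b(t,1)$ for all parities). Your choice has the advantage of treating even and odd elements uniformly in a single stroke, once the preliminary fact $b(C^g,1)=0$ for $g\neq e$ is in hand. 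Your closing remark about the exceptional degenerate case ($\chara F = 2$, $\dim C_{\bar 0} = 1$) is more careful than the paper, which simply invokes nondegeneracy of $b$; in fact that exceptional case only arises for the trivial Hurwitz superalgebra $C=F$, where the statement is vacuous, so both treatments are fine.
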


\begin{proof}
Assume that $0\neq x\in C^g$, $0\neq y\in C^h$ with $gh\neq e$. We need to prove that $b(x,y)=0$.
Since $C_{\bar0}$ and $C_{\bar1}$ are orthogonal relative to $b$, we can assume that $x,y$ are homogeneous elements and have the same parity.
The result is already known for Hurwitz algebras, so we only have to prove it for odd homogeneous elements $x,y$. In that case, we know that $xy\in C_{\bar0}^{gh}\neq C_{\bar0}^e$, with $C_{\bar0}^{gh}$ and $C_{\bar0}^e$ orthogonal relative to $q_{\bar0}$, so $b(xy,1)=0$. 
From Definition~\ref{dfcompsuper}.iii) we get $b(xy,1)-b(y,x)=b(x,1)b(y,1)=0$, so $0=b(xy,1)=b(y,x)=-b(x,y)$, which proves the first part. Since $b$ is nondegenerate, the second statement is clear.
\end{proof}


\begin{example}\label{ejGradB12} 
Gradings of $B(1,2)$.\\
Let $\Gamma$ be a grading on the superalgebra $B(1,2)$. Take a basis $\{u,v\}$ of homogeneous elements in $V=B(1,2)_{\bar1}$. We can assume that $(u,v)=1$. Then the grading $\Gamma$ is a coarsening of the $\mathbb{Z}$-grading given by:
\begin{equation} \label{eq1}
B(1,2)^{0}=F1, \quad B(1,2)^{1}=Fu, \quad B(1,2)^{-1}=Fv. 
\end{equation}
It is clear that all the gradings of this type (i.e., using different basis of $V$) are equivalent, and the only proper coarsenings are the main grading and the trivial grading. This proves the following:
\end{example}

\begin{theorem}
The nontrivial gradings on $B(1,2)$  are, up to equivalence, the $\mathbb{Z}$-grading (\ref{eq1}) and the main $\mathbb{Z}_2$-grading.
\end{theorem}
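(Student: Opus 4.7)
The plan is to show that any grading $\Gamma\colon B(1,2)=\bigoplus_{g\in G}B(1,2)^g$ on $B(1,2)$ is, up to equivalence, a coarsening of the $\mathbb{Z}$-grading (\ref{eq1}), and then read off the short list of coarsenings. Since $B(1,2)_{\bar 0}=F1$ is one-dimensional and $1\in B(1,2)^e$, all nontrivial content of $\Gamma$ is encoded in the induced decomposition $V=\bigoplus_{g\in G}V^g$ of the odd part $V=B(1,2)_{\bar 1}$, with $V^g:=V\cap B(1,2)^g$.

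Next I would invoke the preceding proposition: $b(V^g,V^h)=0$ unless $gh=e$, and $V^g$ is dually paired with $V^{g^{-1}}$ under the nondegenerate alternating form $b|_{V\times V}$. Since $\dim V=2$ and an alternating form vanishes on any one-dimensional subspace, only two possibilities remain: either $V=V^g$ for a single $g$, in which case $V\cdot V\neq 0$ together with $V\cdot V\subseteq B(1,2)^{g^2}\cap F1=B(1,2)^{g^2}\cap B(1,2)^e$ forces $g^2=e$; or $V=V^g\oplus V^{g^{-1}}$ with $g\neq g^{-1}$ and both summands one-dimensional.

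The first case produces the main $\mathbb{Z}_2$-grading when $g\neq e$ and the trivial grading when $g=e$. In the second case, choosing $u\in V^g$ and $v\in V^{g^{-1}}$ with $b(u,v)=1$ exhibits $\Gamma$ as the image of (\ref{eq1}) under the homomorphism $\mathbb{Z}\to G$ sending $1\mapsto g$; since the only multiplicative relation forced on the support is $g\cdot g^{-1}=e$, the universal grading group of (\ref{eq1}) is $\mathbb{Z}$, and any other choice of symplectic basis differs by an automorphism of $B(1,2)$ (scaling $u\mapsto\lambda u$, $v\mapsto\lambda^{-1}v$, or swapping $u\leftrightarrow v$, which corresponds to $g\mapsto g^{-1}$), so all such gradings are mutually equivalent. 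The theorem then follows by listing the coarsenings of (\ref{eq1}) through quotients of $\mathbb{Z}$, which are exactly the grading itself, the main $\mathbb{Z}_2$-grading, and the trivial one. I anticipate no serious obstacle; the only subtlety is to keep in mind that equivalence permits group isomorphisms on the universal group, so all cyclic groups of order $\geq 3$ arising in Case~2 collapse to a single equivalence class.
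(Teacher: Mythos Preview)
Your proposal is correct and follows essentially the same route as the paper: both arguments reduce to choosing a homogeneous basis $\{u,v\}$ of $V$ and recognizing the grading as a coarsening of the $\mathbb{Z}$-grading~(\ref{eq1}). The paper's argument is terser (it simply takes a homogeneous basis of the graded subspace $V$ and normalizes $(u,v)=1$), while you arrive at the same dichotomy more explicitly by invoking the proposition on the bilinear form; this extra care is fine but not strictly needed, since any graded subspace admits a homogeneous basis.
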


\medskip

To give a classification up to isomorphism by an abelian group, we introduce the following notation:

\begin{notation}
Let $G$ be an abelian group (additive notation will be used) and fix a basis $\{u,v\}$ of $V$ with $(u,v)=1$. Given any $g\in G$, denote by $\Gamma(G,g,u,v)$ the $G$-grading induced from the $\mathbb{Z}$-grading in Equation (\ref{eq1}) by the homomorphism $\alpha:\mathbb{Z}\rightarrow G$ determined by $\alpha(1)=g$, that is, $\deg(u)=g$, $\deg(v)=-g$ and $\deg(1)=0$. If $u',v'$ is another basis of $V$ such that $(u',v')=1$, then $\Gamma(G,g,u,v)\rightarrow\Gamma(G,g,u',v')$, $u\mapsto u'$, $v\mapsto v'$, defines an isomorphism of graded superalgebras. Thus, for our purposes we can write $\Gamma(G,g)$ instead of $\Gamma(G,g,u,v)$. It is obvious that $\Gamma(G,g)\cong \Gamma(G,-g)$.
\end{notation}

Now the following result is clear:

\begin{theorem}
Let $\Gamma$ be a $G$-grading on $B(1,2)$ by an abelian group $G$. Then there is an element $g\in G$ such that $\Gamma$ is isomorphic to $\Gamma(G,g)$. Moreover, $\Gamma(G,g)$ is isomorphic to $\Gamma(G,h)$ if and only if either $g=h$ or $g=-h$.
\end{theorem}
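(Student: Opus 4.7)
The plan is to reduce the theorem to an almost immediate consequence of Example~\ref{ejGradB12} and of the fact that any graded isomorphism must match the supports of the odd component in the two gradings.

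For existence, I would start with an arbitrary $G$-grading $\Gamma$ on $B(1,2)$. By Example~\ref{ejGradB12}, $\Gamma$ is a coarsening of the $\mathbb{Z}$-grading (\ref{eq1}) associated with some basis $\{u,v\}$ of $V$ with $(u,v)=1$. Since (\ref{eq1}) has universal grading group $\mathbb{Z}$, any such coarsening is induced by a homomorphism $\alpha:\mathbb{Z}\to G$, and this homomorphism is entirely determined by $g:=\alpha(1)=\deg_\Gamma(u)$. Hence $\Gamma$ coincides with $\Gamma(G,g,u,v)$, so $\Gamma\cong \Gamma(G,g)$ by the isomorphism noted in the definition.

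For the uniqueness statement, the ``if'' direction is the one already flagged in the notation: when $g=h$ the identity works, and when $g=-h$ the explicit map fixing $1$ and sending $u\mapsto v$, $v\mapsto -u$ is readily checked (using $uv=(u,v)1$ and supercommutativity) to be a superalgebra isomorphism that sends the component of degree $g$ in $\Gamma(G,g)$ onto the component of degree $g$ in $\Gamma(G,-g)$. For the converse, suppose $\varphi:\Gamma(G,g)\to\Gamma(G,h)$ is an isomorphism of $G$-graded superalgebras. Since $\varphi$ preserves parities and the $G$-grading, it restricts to a $G$-graded linear isomorphism of $V=B(1,2)_{\bar 1}$. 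In $\Gamma(G,g)$ the support of $V$ is $\{g,-g\}$ (a singleton precisely when $2g=0$) and in $\Gamma(G,h)$ it is $\{h,-h\}$. Equality of these supports forces $\{g,-g\}=\{h,-h\}$, i.e.\ $g=h$ or $g=-h$.

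There is no real obstacle in this argument; both parts are short given Example~\ref{ejGradB12}. The only point that requires a moment of care is the verification that the map $u\mapsto v$, $v\mapsto -u$, $1\mapsto 1$ is multiplicative, but this is an immediate consequence of $uv=1$ and $vu=-1$ under the chosen normalization $(u,v)=1$.
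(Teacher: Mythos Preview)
Your argument is correct and follows exactly the line the paper intends: the paper writes only ``Now the following result is clear'' and treats the theorem as an immediate consequence of Example~\ref{ejGradB12} together with the observation (already made in the Notation) that $\Gamma(G,g)\cong\Gamma(G,-g)$, while you simply spell out the support-comparison for the ``only if'' direction and verify the explicit isomorphism for the ``if'' direction.
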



\begin{example}\label{ejGradB42} 
Gradings of $B(4,2)$.\\
Let $\Gamma$ be a grading on the superalgebra $B(4,2)$. Take a basis $\{ u, v \}$ of homogeneous elements in $V$ such that $(u,v)=1$.
Using this basis we can identify End$_FV \cong {\mathcal M}_2(F)$ and $V\cong F^2$, and define
$$
e_1= \left( \begin{array}{cc} 1 & 0 \\ 0 & 0 \end{array} \right) , \hspace{.3cm}
e_2= \left( \begin{array}{cc} 0 & 0 \\ 0 & 1 \end{array} \right) , \hspace{.3cm}
x= \left( \begin{array}{cc} 0 & 1 \\ 0 & 0 \end{array} \right) , \hspace{.3cm}
y= \left( \begin{array}{cc} 0 & 0 \\ 1 & 0 \end{array} \right) , \hspace{.3cm}
$$
so we have $u \cdot u = -x, \, v\cdot v = y, \, u\cdot v = -e_2, \, v\cdot u = e_1$. It follows that $e_1, e_2, x, y$ are also homogeneous elements. Besides $e_1\cdot x=x=x\cdot e_2$, so both $e_1$ and $e_2$ are in the neutral component. Thus, $\Gamma$ is a coarsening of the $\mathbb{Z}$-grading (5-grading) given by
\begin{gather} \label{eq2}
\begin{aligned}
B(4,2)^0 =& Fe_1\oplus Fe_2, \quad B(4,2)^1 = Fu, \quad B(4,2)^{-1} = Fv, \\
& B(4,2)^2=Fx, \quad B(4,2)^{-2}=Fy.
\end{aligned}
\end{gather}
(Notice that $\{e_1,e_2,x,y\}$ is a canonical basis (as defined in Section 1) of the split quaternion algebra End$_FV \cong {\mathcal M}_2(F)$.)
Its nontrivial coarsenings are the $\mathbb{Z}_4$-grading:
\begin{equation} \label{eq3} 
B(4,2)^{\bar 0}=Fe_1\oplus Fe_2, \, B(4,2)^{\bar 2}=Fx\oplus Fy, \, B(4,2)^{\bar 1}=Fu, \, B(4,2)^{\bar 3}=Fv, 
\end{equation}
the $\mathbb{Z}_3$-grading:
\begin{equation} \label{eq4}
B(4,2)^{\bar0}=Fe_1\oplus Fe_2, \, B(4,2)^{\bar1}=Fu\oplus Fy, \, B(4,2)^{\bar2}=Fv\oplus Fx,
\end{equation}
and the main $\mathbb{Z}_2$-grading. This gives us all the gradings up to equivalence (over their universal grading groups). Hence we have proved:
\end{example}

\begin{theorem} 
The nontrivial gradings on $B(4,2)$ by their universal grading groups are, up to equivalence, the gradings (\ref{eq2}), (\ref{eq3}), (\ref{eq4}) and the main $\mathbb{Z}_2$-grading.
\end{theorem}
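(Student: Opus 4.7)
The plan is to follow the construction sketched in Example~\ref{ejGradB42}, turning the listed observations into a formal argument, and then to enumerate the coarsenings of the resulting $\mathbb{Z}$-grading. The central idea is that, because $V=B(4,2)_{\bar 1}$ is only two-dimensional, the grading on $V$ essentially determines the grading on the whole superalgebra.

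First, I would fix a $G$-grading $\Gamma$ on $B(4,2)$ with $G$ equal to its universal grading group. Since $V$ is homogeneous for the main $\mathbb{Z}_2$-grading, it decomposes into $\Gamma$-homogeneous pieces, so a basis $\{u,v\}$ of $V$ can be chosen with $u,v$ homogeneous of degrees $g:=\deg_\Gamma(u)$ and $h:=\deg_\Gamma(v)$. The preceding proposition forces $b(C^g,C^h)=0$ whenever $gh\neq e$, while the alternating form $(\cdot,\cdot)=b|_{V\times V}$ is nondegenerate and satisfies $(u,u)=(v,v)=0$, so $(u,v)\neq 0$ and hence $h=-g$; after rescaling one may assume $(u,v)=1$.

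Next, I would identify the canonical basis of ${\rm End}_F(V)$ in terms of $u,v$ via $e_1=v\cdot u$, $e_2=-u\cdot v$, $x=-u\cdot u$ and $y=v\cdot v$, all nonzero by direct computation from the defining multiplication of $B(4,2)$. These elements are thus $\Gamma$-homogeneous, with $\deg(e_1)=\deg(e_2)=e$, $\deg(x)=2g$ and $\deg(y)=-2g$, exhibiting $\Gamma$ as a coarsening of the $\mathbb{Z}$-grading (\ref{eq2}) induced by the homomorphism $\mathbb{Z}\to G$ sending $1\mapsto g$.

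Finally, I would enumerate the coarsenings according to the order $n$ of $g$ in $G$. The values $n=1,2,3,4$ produce, in order, the trivial grading, the main $\mathbb{Z}_2$-grading, the $\mathbb{Z}_3$-grading (\ref{eq4}) (identifying $2g$ with $-g$), and the $\mathbb{Z}_4$-grading (\ref{eq3}) (identifying $2g$ with $-2g$); for $n\geq 5$ or $n=\infty$ the five elements $\{0,\pm g,\pm 2g\}$ remain pairwise distinct with no further relation forced by the multiplication, so the universal grading group is $\mathbb{Z}$ and $\Gamma$ coincides with (\ref{eq2}). The step I expect to demand the most attention is this final one: one must read the relations on the support off the canonical multiplication table and verify in each listed case that $G$ really is the universal grading group, so as not to count the same grading twice. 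Apart from this bookkeeping, no serious obstacle is expected.
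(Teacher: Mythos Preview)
Your proposal is correct and follows essentially the same route as the paper: choose a homogeneous basis of $V$, read off the degrees of $e_1,e_2,x,y$ from the products in $B(4,2)$, conclude that every grading is a coarsening of the $5$-grading~(\ref{eq2}), and then list the coarsenings. The only cosmetic difference is that you invoke the proposition on $b(C^g,C^h)$ to get $h=-g$ directly, whereas the paper deduces $\deg(e_1)=0$ (hence $g+h=0$) from the relation $e_1\cdot x=x$; both arguments are equally short.
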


\medskip

Again, we need some more notation to give a classification up to isomorphism.

\begin{notation}
Let $G$ be an abelian group. Take a basis $\{u,v\}$ of $V$ with $(u,v)=1$ and define $e_1,e_2,x,y$ as above. Given $g\in G$, we call $\Gamma(G,g)$ the $G$-grading on $B(4,2)$ induced from the $\mathbb{Z}$-grading in Equation (\ref{eq2}) by the homomorphism $\mathbb{Z}\rightarrow G$, $1\mapsto g$. Notice that $\Gamma(G,g)$ does not depend on the choice of $u$ and $v$, up to isomorphism.
\end{notation}

\begin{theorem}
Let $\Gamma$ be a $G$-grading on $B(4,2)$ by an abelian group $G$. Then $\Gamma$ coincides with $\Gamma(G,g)$ for some $g\in G$ (and some basis $\{u,v\}$ of $V$). Moreover, $\Gamma(G,g)\cong\Gamma(G,h)$ if and only if $g=h$ or $g=-h$.
\end{theorem}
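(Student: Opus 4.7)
The plan is to leverage Example 2.7, which isolates the fine $\mathbb{Z}$-grading (\ref{eq2}), for the existence statement, and to use the natural action of $\mathrm{Sp}(V)$ on $B(4,2)$ to produce the isomorphism required for the equivalence criterion. For existence, I would start with a $G$-grading $\Gamma$ on $B(4,2)$ and invoke the construction in Example 2.7 to obtain a homogeneous basis $\{u,v\}$ of $V$ with $(u,v)=1$; setting $g:=\deg_\Gamma(u)\in G$, the relations $u\cdot v=-e_2$, $v\cdot u=e_1$, $u\cdot u=-x$, $v\cdot v=y$ together with the fact (shown in Example 2.7) that $e_1,e_2$ lie in the neutral component should force $\deg(v)=-g$, $\deg(x)=2g$ and $\deg(y)=-2g$. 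These are exactly the degrees that define $\Gamma(G,g)$, so I would conclude $\Gamma=\Gamma(G,g)$.

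For the sufficiency of the isomorphism criterion, $\Gamma(G,g)=\Gamma(G,g)$ is trivial; to exhibit $\Gamma(G,g)\cong\Gamma(G,-g)$, I would consider the linear map $\psi:V\to V$ sending $u\mapsto v$ and $v\mapsto -u$. This satisfies $(\psi(u),\psi(v))=(v,-u)=(u,v)=1$, so it lies in $\mathrm{Sp}(V)$; and since the construction of $B(4,2)$ from the pair $(V,(\cdot,\cdot))$ in Example 2.3 is functorial, $\psi$ should extend to an automorphism $\varphi$ of $B(4,2)$ (acting by conjugation on the even part $\mathrm{End}_F(V)$). A direct check on the canonical basis should then confirm that $\varphi$ carries each homogeneous component of $\Gamma(G,g)$ onto the same-degree component of $\Gamma(G,-g)$.

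Conversely, for necessity, any isomorphism $\phi:\Gamma(G,g)\to\Gamma(G,h)$ of $G$-graded superalgebras preserves the main $\mathbb{Z}_2$-grading and therefore restricts to a graded isomorphism $V\to V$. Reading off the support of the induced grading on $V$ gives $\{g,-g\}$ on the source side and $\{h,-h\}$ on the target side, and these must coincide, forcing $g=\pm h$. The only genuinely delicate step in the whole argument is the construction of the automorphism $\varphi$ in the sufficiency direction; this should, however, follow cleanly from the canonical action of $\mathrm{Sp}(V)$ on $B(4,2)$ by automorphisms, which is built into the very definition of the superalgebra.
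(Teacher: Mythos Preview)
Your proposal is correct and follows essentially the same approach as the paper: existence via Example~\ref{ejGradB42}, necessity by comparing supports on the odd component $V=B(4,2)_{\bar1}$, and sufficiency via the symplectic map $u\mapsto v$, $v\mapsto -u$. The only cosmetic difference is that you frame the extending automorphism through the $\mathrm{Sp}(V)$-action on $B(4,2)$, whereas the paper simply asserts that the correspondence $u\mapsto v$, $v\mapsto -u$ determines a unique isomorphism of graded superalgebras.
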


\begin{proof} 
We know that every grading on $B(4,2)$ is, up to equivalence, a coarsening of the $\mathbb{Z}$-grading (\ref{eq2}). Hence, every $G$-grading on $B(4,2)$ coincides with a grading $\Gamma(G,g)$ for some $g\in G$ and some basis $\{u,v\}$ of $V$. If $\Gamma(G,g)\cong\Gamma(G,h)$, the supports of the gradings on the odd component $B(4,2)_{\bar1}$ coincide, i.e., $\{g,-g\}=\{h,-h\}$. Conversely, assume that $\{g,-g\}=\{h,-h\}$. If $g=h$, then $\Gamma(G,g)\cong\Gamma(G,h)$, so we can assume that $h\neq g=-h$. 
Then, we have $\Gamma(G,g)^g_{\bar1}=Fu=\Gamma(G,h)^{-g}_{\bar1}$ and $\Gamma(G,g)^{-g}_{\bar1}=Fv=\Gamma(G,h)^g_{\bar1}$, and the correspondence $u\mapsto v$, $v\mapsto -u$ determines a unique isomorphism $\Gamma(G,g)\rightarrow\Gamma(G,h)$ of graded superalgebras.
\end{proof}


\begin{theorem} \label{hurwitzsuperalggrad} 
Consider the Hurwitz superalgebra $C=\CD(Q,\alpha)$ over a field $F$ with $\chara F=2$, where $Q$ is a Hurwitz algebra of dimension 2 or 4, and $0\neq\alpha\in F$. Let $\Gamma:C=\bigoplus_{g\in G}C^g$ be a fine grading, where $G$ is the universal grading group. Then we have, up to equivalence, one of the following cases:

\noindent $\bullet$ If $\dim Q=2$ ($\dim C=4$), then either:
\begin{enumerate}
\item[\textnormal{i)}] $G=\mathbb{Z}_2$, $Q=C_{\bar0}$ is not split and $\Gamma$ is the main grading, or
\item[\textnormal{ii)}] $G=\mathbb{Z}$ ($3$-grading), $Q=C_{\bar0}$ is split and we have a canonical basis $\{e_1,e_2,u_1,v_1\}$ of $C$ consisting of homogeneous elements such that
    \begin{equation} \label{eq5}
    C_{\bar0}^0=Q=\lspan\{e_1,e_2\}, \ C_{\bar1}^1=\lspan\{u_1\}, \ C_{\bar1}^{-1}=\lspan\{v_1\}.
    \end{equation}
\end{enumerate}

\noindent $\bullet$ If $\dim Q=4$ ($\dim C=8$), we have one of the following cases:
\begin{enumerate}

\item[\textnormal{i)}] $G=\mathbb{Z}_2^2$, $C_{\bar0}^{(\bar0,\bar0)}$ is not split and $\Gamma$ is a refinement of the main $\mathbb{Z}_2$-grading associated to a Cayley-Dickson doubling process as follows: $C=Q\oplus Qu$, $Q=K\oplus Kv$, $K=F1+Fw$ with $u,v,w$ nonisotropic orthogonal elements with zero trace, and
\begin{gather} \label{eq6}
\begin{aligned}
C_{\bar0}^{(\bar0,\bar0)}=K, \; C_{\bar0}^{(\bar1,\bar0)}=Kv, \;  C_{\bar1}^{(\bar0,\bar1)}=Ku, \; C_{\bar1}^{(\bar1,\bar1)}=K(vu).
\end{aligned}
\end{gather}

\item[\textnormal{ii)}] $G=\mathbb{Z}^2$, $Q=C_{\bar0}$ is split, there is a canonical basis of $C$ consisting of homogeneous elements with
$$C_{\bar0}=\lspan\{e_1,e_2,u_3,u_3\}, \ C_{\bar1}=\lspan\{u_1,u_2,v_1,v_2\},$$
and $\Gamma$ is the Cartan $\mathbb{Z}^2$-grading:
\begin{gather} \label{eq7}
\begin{aligned}
C_{\bar0}^{(0,0)} =\lspan & \{e_1,e_2\}, \; C_{\bar0}^{(1,1)} =\lspan\{v_3\}, \; C_{\bar0}^{(-1,-1)}=\lspan\{u_3\}, \\
& C_{\bar1}^{(1,0)} =\lspan\{u_1\}, \; C_{\bar1}^{(-1,0)} =\lspan\{v_1\}, \\
& C_{\bar1}^{(0,1)} =\lspan\{u_2\}, \;  C_{\bar1}^{(0,-1)} =\lspan\{v_2\}.
\end{aligned}
\end{gather}
\end{enumerate}

\end{theorem}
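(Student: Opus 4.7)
The plan is to restrict $\Gamma$ to the Hurwitz algebra $Q=C_{\bar 0}$, apply the classification of fine gradings on Hurwitz algebras from \cite{Eld98}, and then lift the resulting grading to the odd component $C_{\bar 1}=Qu$, which is a graded $Q$-bimodule of rank one. Since $\Gamma$ is compatible with the main grading, $\Gamma|_{C_{\bar 0}}$ is a $G$-grading on $Q$; after coarsening onto the subgroup of $G$ generated by its support, this is one of the fine gradings of \cite{Eld98}.

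\textbf{Case $\dim Q=2$.} A $2$-dimensional Hurwitz algebra admits only the trivial group grading, so $Q\subseteq C^e$. If $Q$ is a field, $C_{\bar 1}$ is an irreducible $Q$-module, hence it lies in a single component $C^g_{\bar 1}$, and the relation $C_{\bar 1}\cdot C_{\bar 1}\subseteq C_{\bar 0}\subseteq C^e$ forces $g^2=e$; fineness then gives $G=\mathbb{Z}_2$ and case (i). If $Q$ is split, with idempotents $e_1,e_2$, one computes via the Cayley-Dickson formula (using $\bar{e_i}=e_{3-i}$) that, after rescaling $u_1:=e_1u$ and $v_1:=-\alpha^{-1}e_2u$, the quadruple $\{e_1,e_2,u_1,v_1\}$ is a canonical basis of $C$. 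Any proper refinement of the main grading must assign $u_1$ and $v_1$ opposite nonzero degrees, and universality then yields $G=\mathbb{Z}$ and case (ii).

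\textbf{Case $\dim Q=4$.} The fine gradings on the quaternion algebra $Q$ are, up to equivalence, the $\mathbb{Z}_2$-grading arising from a Cayley-Dickson doubling $Q=K\oplus Kv$ with $K=F1+Fw$ a $2$-dimensional Hurwitz subalgebra (applicable whenever $Q$ is not split), and the Cartan $\mathbb{Z}$-grading (when $Q$ is split). In the first option, applying the analogous $K$-module analysis to $Qu=Ku\oplus K(vu)$ splits the odd part into two rank-one pieces; the relations among the four components $K,Kv,Ku,K(vu)$ imposed by the iterated Cayley-Dickson multiplication force their degrees to occupy the four elements of $\mathbb{Z}_2^2$ uniquely, as in (\ref{eq6}). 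Fineness follows because $K\subseteq C^{(\bar 0,\bar 0)}$ is a field (hence carries only the trivial grading) and each of the four components is irreducible as a $K$-module. In the Cartan option, one picks a doubling element $u\in C_{\bar 1}$ and extends the canonical basis $\{e_1,e_2,u_3,v_3\}$ of $Q$ to a canonical basis $\{e_1,e_2,u_1,u_2,u_3,v_1,v_2,v_3\}$ of $C$ as in Section 1, with $u_1,u_2,v_1,v_2\in C_{\bar 1}$; the Cartan $\mathbb{Z}^2$-grading on $C$ then restricts to a refinement of the main grading with one-dimensional components, which is therefore fine, giving (\ref{eq7}).

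The main obstacle, in my view, is verifying that the $\mathbb{Z}_2^2$-grading in subcase (i) of $\dim Q=4$ admits no further refinement: one must show that no finer compatible group grading exists, which reduces to the fact that $1\in K$ forces $\deg 1=e$ and then $w$ lies in a unique nontrivial component, together with the $K$-irreducibility of each of the four components. A secondary technicality, in the Cartan case, is ensuring that the doubling element $u$ can be chosen inside $C_{\bar 1}$ so that the standard canonical basis construction of Section 1 is compatible with the fixed main grading; this follows because $C_{\bar 1}$ is a $4$-dimensional irreducible $Q$-bimodule and any nonzero isotropic odd element serves as a doubling element.
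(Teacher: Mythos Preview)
Your overall strategy—restrict $\Gamma$ to $Q=C_{\bar 0}$ and invoke the classification of gradings on Hurwitz algebras—is a reasonable alternative to the paper's approach, which instead splits on whether $G$ is an elementary $2$-group and then either builds the grading by iterated Cayley--Dickson steps (finding nonisotropic homogeneous elements) or constructs nontrivial idempotents $e_1,e_2\in C^0_{\bar 0}$ from an element of order $>2$ in the support and obtains a canonical basis via the Peirce decomposition. However, your execution has two genuine gaps.

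First, you assert that $\Gamma|_Q$ ``is one of the fine gradings of \cite{Eld98}'', but fineness of $\Gamma$ on $C$ does not automatically imply fineness of $\Gamma|_Q$ on $Q$. When $\dim Q=4$ you must separately rule out the possibilities that $\Gamma|_Q$ is trivial (then $C_{\bar 1}$ is a simple $Q$-bimodule, so $\Gamma$ is the main grading, which is not fine) or that $\Gamma|_Q$ is the $\mathbb{Z}_2$-grading with \emph{split} neutral component $K$ (then $\Gamma$ is a proper coarsening of the Cartan grading). Only after this can you restrict to the fine gradings on $Q$. Second, in the Cartan case your argument runs in the wrong direction: you construct a Cartan $\mathbb{Z}^2$-grading on $C$ and observe that it is fine, but the task is to show that an \emph{arbitrary} fine $\Gamma$ whose restriction to $Q$ is the Cartan $\mathbb{Z}$-grading must be equivalent to \eqref{eq7}. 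For this you need to prove that the $2$-dimensional graded subspaces $U\cap C_{\bar 1}$ and $V\cap C_{\bar 1}$ necessarily split into one-dimensional $\Gamma$-homogeneous pieces and that the resulting canonical basis realises $\Gamma$; this is exactly where the paper's Peirce-decomposition argument, starting from idempotents already known to lie in $C^0_{\bar 0}$, does the real work.
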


\begin{proof} 
Since $G$ is abelian (see Remark \ref{groupisabelian}), we will use additive notation. We will denote by $q$ the norm of $C$ (and also its polar form) regarded as a Hurwitz algebra. \\
$\bullet$ Case $\dim Q=2$ ($\dim C=4$).\\
Since $\chara F=2$, $\dim Q=2$ and the norm is nondegenerate, it follows that the induced grading in $C_{\bar0}=Q$ is the trivial grading, so $Q = C^0_{\bar0}$. There are two cases:

Assume $G$ is an elementary 2-group. Let $0\neq g\in G$ be such that $C^g_{\bar1}\neq0$. Since $q$ is nondegenerate, we can take $u\in C^g_{\bar1}$ with $q(u)\neq0$. Then, by the Cayley-Dickson doubling process, $C=Q\oplus Qu$ and the grading coincides with the main $\mathbb{Z}_2$-grading. Note that if $Q=C_{\bar0}$ is split, this grading is a coarsening of the grading (\ref{eq5}), so it is not fine.

Assume $G$ is not an elementary 2-group. Then there is $g\in\Supp\Gamma$ with ord$(g)>2$. But $q$ is nondegenerate, so we can take $0\neq x\in C^g$ and $0\neq y\in C^{-g}$ with the same parity and such that $q(x,y)=-1$\,($=1$ as the characteristic is $2$). Since $x$ and $y$ are orthogonal to $1$ and $q(x,y)=-1$, it follows that $e_1:=xy$ and $e_2:=\bar e_1=1-e_1$ are nontrivial idempotents in $C_{\bar0}^0$. Thus, the Peirce spaces relative to the orthogonal idempotents $e_1$ and $e_2$, that is, $U=(e_1C)e_2$ and $V=(e_2C)e_1$, are graded subspaces and we can obtain a canonical basis $\{e_1,e_2,u_1,v_1\}$ of homogeneous elements. Then
$C_{\bar0}^0=Q=\lspan\{e_1,e_2\}$, $C_{\bar1}^g=\lspan\{u_1\}$, $C_{\bar1}^{-g}=\lspan\{v_1\}$, so we have a 3-grading as in Equation~(\ref{eq5}).

\noindent $\bullet$ Case $\dim Q=4$ ($\dim C=8$).

Assume $G$ is an elementary 2-group. If the induced grading on $Q=C_{\bar0}$ is the trivial grading, in the same way as before we see that the grading is the main grading (but this is not a fine grading). Hence, we can take $0\neq g\in G$ such that $C^g_{\bar0}\neq0$. Since $q$ is nondegenerate, there is $v\in C^g_{\bar0}$ with $q(v)\neq0$, and by the Cayley-Dickson doubling process we have the $\mathbb{Z}_2$-graded composition subalgebra $C_{\bar0}^0\oplus C_{\bar0}^0 v$. Since $\chara F=2$, $C_{\bar0}$ does not admit a $\mathbb{Z}_2^2$-grading and therefore $C_{\bar0}=C_{\bar0}^0\oplus C_{\bar0}^0 v$. We repeat the same arguments with a homogeneous element $u\in C_{\bar1}$ such that $q(u)\neq0$, and we obtain $C_{\bar1}=C_{\bar0}u$. Hence $\Gamma$ is a $\mathbb{Z}_2^2$-grading associated to the Cayley-Dickson doubling process (which is a refinement of the main $\mathbb{Z}_2$-grading), as in Equation (\ref{eq6}) .

Assume $G$ is not an elementary 2-group. Then there is $g\in\Supp\Gamma$ with ord$(g)>2$. Since $q$ is nondegenerate we can take elements $0\neq x\in C^g$ and $0\neq y\in C^{-g}$ with the same parity such that $q(x,y)=-1$. Then $e_1:=xy$ and $e_2:=\bar e_1=1-e_1$ are nontrivial idempotents in $C_{\bar0}^0$, and using the Peirce decomposition we obtain a canonical basis $\{e_1,e_2,u_1,u_2,u_3,v_1,v_2,v_3\}$ consisting of homogeneous elements, and where $u_i, v_i$ have the same parity for each $i=1,2,3$. We can assume without loss of generality that $u_3,v_3\in C_{\bar0}$ and $u_1,u_2,v_1,v_2\in C_{\bar1}$. Write $g_i= \deg(u_i)$, so from $u_iv_i=-e_1$ it follows that deg$(v_i)=-g_i$. Since $v_1=u_2u_3$, $g_1+g_2+g_3=0$ and the grading is the Cartan $\mathbb{Z}^2$-grading.

In the case of the $\mathbb{Z}_2^2$-grading (\ref{eq6}), if the subalgebra $C_{\bar0}^{(\bar0,\bar0)}$ were split, then we could take nontrivial idempotents $e_1,e_2=1-e_1\in C_{\bar0}^{(\bar0,\bar0)}$ and complete $\{e_1, e_2\}$ (in the same way as usual) to a canonical basis consisting of homogeneous elements. So in such a case the $\mathbb{Z}_2^2$-grading would not be fine, as it would be a coarsening of the Cartan grading.
\end{proof}


\begin{corollary}\label{corohurwitzsuperalggrad} 
Consider the Hurwitz superalgebra $C=\CD(Q,\alpha)$ over a field $F$ with $\chara F=2$, where $Q$ is a Hurwitz algebra of dimension 2 or 4, and $0\neq\alpha\in F$. Let $\Gamma : C=\bigoplus_{g\in G}C^g$ be a nontrivial grading, where $G$ is the universal grading group. Then:

\noindent $\bullet$ If $\dim Q=2$ ($\dim C=4$), then either
$G=\mathbb{Z}_2$ and $\Gamma$ is the main grading, or
$G=\mathbb{Z}$ and $\Gamma$ is the grading in Equation~\eqref{eq5}.

\noindent $\bullet$ If $\dim Q=4$ ($\dim C=8$), we have up to equivalence one of the following cases:
\begin{enumerate}

\item[\textnormal{i)}] $G=\mathbb{Z}_2^2$ and $\Gamma$ is the grading in Equation~\eqref{eq6}. (Here $K$ is not necessarily split.)

\item[\textnormal{ii)}] $G=\mathbb{Z}_2$ and $\Gamma$ is, either the main $\mathbb{Z}_2$-grading, or the coarsening of the grading (\ref{eq6}) given by:
\begin{equation}\label{cor1eq3} 
C^{\bar0}=K\oplus Ku, \quad C^{\bar1}=Kv\oplus K(vu).
\end{equation}

\item[\textnormal{iii)}] $Q=C_{\bar0}$ is split, there is a canonical basis of $C$ consisting of homogeneous elements such that the main grading is
$$C_{\bar0}=\lspan\{e_1,e_2,u_3,v_3\}, \ C_{\bar1}=\lspan\{u_1,u_2,v_1,v_2\},$$
and we have one of the following cases:
\begin{enumerate}

\item[\textnormal{a)}] $G=\mathbb{Z}^2$ and $\Gamma$ is the Cartan grading given in Equation~\eqref{eq7}.

\item[\textnormal{b)}] $G=\mathbb{Z}$ and $\Gamma$ is a 3-grading, where either
\begin{gather} \label{cor1eq5} \begin{aligned}
& C_{\bar0}^0=Q=\lspan\{e_1,e_2,u_3,v_3\}, \\ 
C_{\bar1}^1= & \lspan\{u_1,v_2\}, \quad C_{\bar1}^{-1}=\lspan\{u_2,v_1\},
\end{aligned} \end{gather}
or
\begin{gather} \label{cor1eq6} \begin{aligned}
& C^0= \lspan\{e_1,e_2,u_1,v_1\}, \\ 
C^1= & \lspan \{u_3,v_2\}, \; C^{-1}=\lspan\{u_2,v_3\}.
\end{aligned} \end{gather}

\item[\textnormal{c)}] $G=\mathbb{Z}$ and $\Gamma$ is a 5-grading, where either
\begin{gather} \label{cor1eq7} \begin{aligned}
C_{\bar0}^0=\lspan & \{e_1,e_2\},  \; C_{\bar0}^2=\lspan\{v_3\}, \; C_{\bar0}^{-2}=\lspan\{u_3\}, \\
& C_{\bar1}^1=\lspan\{u_1,u_2\}, \; C_{\bar1}^{-1}=\lspan\{v_1,v_2\}, 
\end{aligned} \end{gather}
or
\begin{gather} \label{cor1eq8} \begin{aligned}
C_{\bar0}^0=\lspan & \{e_1,e_2\}, \; C_{\bar1}^2=\lspan\{v_1\}, \; C_{\bar1}^{-2}=\lspan\{u_1\}, \\
& C^1=\lspan\{u_2,u_3\}, \; C^{-1}=\lspan\{v_2,v_3\}.
\end{aligned} \end{gather}

\item[\textnormal{d)}] $G=\mathbb{Z}_3$, and
\begin{equation} \label{cor1eq9}
C_{\bar0}^{\bar0}=\lspan\{e_1,e_2\}, \; C^{\bar1}=\lspan\{u_1,u_2,u_3 \}, \; C^{\bar2}=\lspan\{v_1,v_2,v_3\}.
\end{equation}

\item[\textnormal{e)}] $G=\mathbb{Z}_4$, where either
\begin{gather} \label{cor1eq10} \begin{aligned}
& C_{\bar0}^{\bar0}=\lspan\{e_1,e_2\}, \; C_{\bar1}^{\bar1}=\lspan\{u_1,u_2\}, \\
& C_{\bar0}^{\bar2}=\lspan\{u_3,v_3\}, \; C_{\bar1}^{\bar3}=\lspan\{v_1,v_2\},
\end{aligned} \end{gather}
or
\begin{gather} \label{cor1eq11} \begin{aligned}
& C_{\bar0}^{\bar0}=\lspan\{e_1,e_2\}, \;  C^{\bar1}=\lspan\{u_2,u_3\}, \\
& C_{\bar1}^{\bar2}=\lspan\{u_1,v_1\}, \; C^{\bar3}=\lspan\{v_2,v_3\}.
\end{aligned} \end{gather}

\item[\textnormal{f)}] $G=\mathbb{Z}\times\mathbb{Z}_2$, where either
\begin{gather} \label{cor1eq12} \begin{aligned}
C_{\bar0}^{(0,\bar0)}=\lspan\{e_1,e_2\}, \; C_{\bar1}^{(1,\bar0)}=\lspan\{u_2\},
\; C_{\bar1}^{(-1,\bar0)}=\lspan\{v_2\}, \\
C_{\bar1}^{(0,\bar1)}=\lspan\{u_1,v_1\}, \; C_{\bar0}^{(-1,\bar1)}=\lspan\{u_3\},
\; C_{\bar0}^{(1,\bar1)}=\lspan\{v_3\},
\end{aligned} \end{gather}
or
\begin{gather} \label{cor1eq13} \begin{aligned}
C_{\bar0}^{(0,\bar0)}=\lspan\{e_1,e_2\}, \; C_{\bar1}^{(1,\bar0)}=\lspan\{u_2\},
\; C_{\bar1}^{(-1,\bar0)}=\lspan\{v_2\}, \\
C_{\bar0}^{(0,\bar1)}=\lspan\{u_3,v_3\}, \; C_{\bar1}^{(-1,\bar1)}=\lspan\{u_1\},
\; C_{\bar1}^{(1,\bar1)}=\lspan\{v_1\}.
\end{aligned} \end{gather}
\end{enumerate}
\end{enumerate}

\end{corollary}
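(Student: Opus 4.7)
The plan is to invoke Theorem~\ref{hurwitzsuperalggrad}, according to which every $G$-grading on $C$ (with $G$ its universal grading group) is a coarsening of one of the fine gradings listed there. Hence it suffices to enumerate all coarsenings of each fine grading, up to equivalence of graded superalgebras, and sort them by their universal group to match the statement.

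For $\dim Q=2$, any coarsening of the $\mathbb{Z}$-grading (\ref{eq5}) is obtained via a homomorphism $\mathbb{Z}\rightarrow G$, $1\mapsto g$: if $g=0$ we obtain the trivial grading; if $\mathrm{ord}(g)=2$, the components $C^1$ and $C^{-1}$ merge and we recover the main $\mathbb{Z}_2$-grading; and if $\mathrm{ord}(g)\geq 3$, the three nonzero components remain distinct, so the partition of $C$ is unchanged and the coarsening is equivalent to (\ref{eq5}) itself. In the non-split $\dim Q=2$ case the only fine grading is the main $\mathbb{Z}_2$-grading. This settles the first bullet.

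For $\dim Q=4$ I would handle the two fine gradings separately. The $\mathbb{Z}_2^2$-grading (\ref{eq6}) has three index-$2$ subgroups; the corresponding three quotient maps to $\mathbb{Z}_2$ yield the main $\mathbb{Z}_2$-grading, the grading (\ref{cor1eq3}), and a third partition $\{K\oplus K(vu),\,Kv\oplus Ku\}$. The last of these, which has the same super-dimension profile as (\ref{cor1eq3}), should be equivalent to (\ref{cor1eq3}); this is seen by replacing the odd Cayley-Dickson generator $u$ with $u'=vu$ (which has $q(u')=\alpha q(v)\neq 0$) and re-running the construction of (\ref{eq6}) starting from the data $(K,v,u')$, producing an equivalent fine grading in which the analogue of the (\ref{cor1eq3})-coarsening is precisely this third partition. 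This yields cases (i) and (ii).

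Finally, for the Cartan $\mathbb{Z}^2$-grading (\ref{eq7}) (which requires $Q$ to be split), coarsenings correspond to subgroups $H\leq\mathbb{Z}^2$, and I would classify them up to the subgroup of $GL_2(\mathbb{Z})$ preserving the main $\mathbb{Z}_2$-grading, i.e., the automorphisms of $\mathbb{Z}^2$ that permute the odd degrees $\pm(1,0),\pm(0,1)$ among themselves and the even degrees $\pm(1,1)$ among themselves. By Smith normal form, rank-$2$ subgroups produce finite quotients $\mathbb{Z}_{d_1}\times\mathbb{Z}_{d_2}$ with $d_1\mid d_2$, and rank-$1$ subgroups produce quotients $\mathbb{Z}$ or $\mathbb{Z}\times\mathbb{Z}_d$; each orbit then yields, by direct inspection, one of the gradings (iii-a) through (iii-f). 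The main obstacle is this combinatorial enumeration: one must describe the relevant $GL_2(\mathbb{Z})$-symmetry precisely, match each orbit to its listed representative, discard those already equivalent to Cayley-Dickson coarsenings from case (ii), and verify that no two listed representatives are equivalent, which can be carried out by comparing the dimension and super-dimension of each homogeneous component.
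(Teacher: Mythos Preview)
Your approach is exactly the paper's: the proof there consists of the single sentence ``It suffices to compute the coarsenings of the gradings in Theorem~\ref{hurwitzsuperalggrad},'' and you have simply fleshed out how that computation goes. Your treatment of the three $\mathbb{Z}_2$-coarsenings of~(\ref{eq6}) and your identification of the relevant symmetry subgroup of $GL_2(\mathbb{Z})$ for the Cartan case are both correct, so the remaining work is indeed the routine enumeration you describe.
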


\begin{proof}
It suffices to compute the coarsenings of the gradings in Theorem~\ref{hurwitzsuperalggrad}.
\end{proof}


\begin{notation}
We will give a classification up to isomorphism of the $G$-gradings on $C$, where $G$ is an abelian group and $C=\CD(Q,\alpha)$ a Hurwitz superalgebra (of dimension 4 or 8) over an algebraically closed field $F$ of characteristic $2$. We will study now the gradings that appear in Corollary~\ref{corohurwitzsuperalggrad}, but before doing that we need to introduce the following notation:

$\bullet$ Consider the case $\dim C=4$. Given any $g\in G$, we will denote by $\Gamma(G,g)$ the $G$-grading induced from the $\mathbb{Z}$-grading given by Equation~\eqref{eq5} by the homomorphism $\mathbb{Z}\rightarrow G$, $1\mapsto g$. Note that the $\mathbb{Z}$-gradings obtained by Equation~\eqref{eq5} for different choices of canonical basis of $Q$ are isomorphic, so that there is no ambiguity in the notation $\Gamma(G,g)$. Since $F$ is algebraically closed, $C$ is split and it is clear that any grading $\Gamma$ on $C$ is a coarsening of a $\mathbb{Z}$-grading given by Equation~\eqref{eq5}, so that there is $g\in G$ such that $\Gamma=\Gamma(G,g)$.

$\bullet$ Consider the case $\dim C=8$. Let $\gamma=(g_1,g_2,g_3)$ be a triple of elements of $G$ with $g_1+g_2+g_3=0$, and take a canonical basis of $C$ such that $C_{\bar0}=\{e_1,e_2,u_3,v_3\}$, $C_{\bar1}=\{u_1,u_2,v_1,v_2\}$. Denote by $\Gamma(G,\gamma)$ the $G$-grading induced from the Cartan $\mathbb{Z}^2$-grading (see Equation~\eqref{eq7}) by the homomorphism $\mathbb{Z}^2\rightarrow G$, $(1,0)\mapsto g_1$, $(0,1)\mapsto g_2$. This is equivalent to set deg$(e_j)=0$ for $j=1,2$, deg$(u_i)=g_i$ and deg$(v_i)=-g_i$ for $i=1,2,3$. Since $F$ is algebraically closed, $C$ is split and by Theorem~\ref{hurwitzsuperalggrad} we get that any grading on $C$ is a coarsening of a $\mathbb{Z}^2$-grading. Hence, if $\Gamma$ is a $G$-grading on $C$, there is $\gamma$ such that $\Gamma=\Gamma(G,\gamma)$. We will say that $\gamma\sim\gamma'$ if there are $\sigma\in\text{Sym(2)}$ and $\varepsilon\in\{\pm1\}$ such that $g_3'=\varepsilon g_3$ and $g_i'=\varepsilon g_{\sigma(i)}$ for $i=1,2$.
\end{notation}

\begin{theorem}
Let $C=\CD(Q,\alpha)$ be a Hurwitz superalgebra over an algebraically closed field $F$ of characteristic 2, and let $G$ be an abelian group.
Let $\Gamma$ be a $G$-grading on $C$. Then, we have up to isomorphism the following classification: if $\dim C=4$ there is some $g\in G$ such that $\Gamma$ is isomorphic to $\Gamma(G,g)$, and if $\dim C=8$ there is some $\gamma=(g_1,g_2,g_3)$ such that $\Gamma$ is isomorphic to $\Gamma(G,\gamma)$. Moreover:
\begin{enumerate}
\item[$\bullet$] $\Gamma(G,g)\cong\Gamma(G,h)$ if and only if $g=h$ or $g=-h$.
\item[$\bullet$] $\Gamma(G,\gamma)\cong\Gamma(G,\gamma')$ if and only if $\gamma\sim\gamma'$.
\end{enumerate}
\end{theorem}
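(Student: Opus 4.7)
The existence part of both statements is essentially already observed in the preceding Notation block: since $F$ is algebraically closed, $C$ is split, so by Theorem~\ref{hurwitzsuperalggrad} every $G$-grading is a coarsening of the $\mathbb{Z}$-grading in Equation~\eqref{eq5} (if $\dim C=4$) or of the Cartan $\mathbb{Z}^2$-grading in Equation~\eqref{eq7} (if $\dim C=8$); such a coarsening amounts to a group homomorphism $\mathbb{Z}\to G$ or $\mathbb{Z}^2\to G$, determined by the image of $1$ (yielding $g\in G$) or the images of $(1,0),(0,1)$ (yielding the triple $\gamma=(g_1,g_2,g_3)$ with $g_3=-g_1-g_2$). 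Thus the plan is to focus on the equivalence relations $g\sim\pm g$ and $\gamma\sim\gamma'$.

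For the case $\dim C=4$, if $\varphi\colon\Gamma(G,g)\to\Gamma(G,h)$ is a $G$-graded superalgebra isomorphism, then $\varphi$ restricts to a $G$-graded isomorphism of the odd part $V=Fu\oplus Fv$, whose support in $\Gamma(G,g)$ is $\{g,-g\}$ and in $\Gamma(G,h)$ is $\{h,-h\}$, forcing $h=\pm g$. Conversely, when $h=-g$, the linear map fixing $1$ and sending $e_1\leftrightarrow e_2$, $u_1\leftrightarrow v_1$ is a superalgebra automorphism (a direct check from the multiplication table of the split 4-dimensional Hurwitz superalgebra $\CD(K,\alpha)$) that realizes the required isomorphism $\Gamma(G,g)\cong\Gamma(G,-g)$.

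For the case $\dim C=8$, suppose $\varphi\colon\Gamma(G,\gamma)\to\Gamma(G,\gamma')$ is a $G$-graded superalgebra isomorphism. The restriction of the $G$-grading to $C_{\bar0}$ has, away from the neutral degree, nonzero components only at $\pm g_3$ (and $\pm g_3'$), so comparing dimensions of $G$-homogeneous components of $C_{\bar0}$ forces $\{g_3,-g_3\}=\{g_3',-g_3'\}$, i.e., $g_3'=\varepsilon g_3$ for some $\varepsilon\in\{\pm1\}$. The analogous comparison on $C_{\bar1}$, together with the relation $g_1+g_2=-g_3$ (and likewise $g_1'+g_2'=-g_3'=\varepsilon(g_1+g_2)$), forces $(g_1',g_2')$ to be a permutation of $(\varepsilon g_1,\varepsilon g_2)$, which is precisely $\gamma\sim\gamma'$. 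For the converse, the transposition $\sigma=(1\,2)$ is realized by the automorphism swapping $u_1\leftrightarrow u_2$ and $v_1\leftrightarrow v_2$ (with the appropriate sign adjustment on $u_3,v_3$ to preserve the multiplication), while the sign change $\varepsilon=-1$ is realized by the automorphism $e_1\leftrightarrow e_2$, $u_i\leftrightarrow v_i$ for $i=1,2,3$; both of these preserve the parity decomposition and can be verified directly from the multiplication table of the split Cayley superalgebra. Their composition yields any remaining case.

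The main obstacle is the degenerate-support case in the $\dim C=8$ argument: when some of the $g_i$ coincide or are equal to their own negatives, the homogeneous components are no longer one-dimensional and the recovery of $\{g_1,g_2,g_3\}$ from the $G$-grading is less direct. The resolution is to work separately with the $G$-supports of $C_{\bar0}$ and $C_{\bar1}$ and with the dimensions of the components $C_{\bar0}^h$ and $C_{\bar1}^h$; since $u_3,v_3$ are even while $u_1,u_2,v_1,v_2$ are odd, parity isolates $g_3$ from $g_1,g_2$, and combined with $g_1+g_2+g_3=0$ this pins down $\gamma$ up to $\sim$ in all cases.
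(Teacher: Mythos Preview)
Your proof is correct and, in the $4$-dimensional case and in the converse direction of the $8$-dimensional case, essentially identical to the paper's. In the forward direction for $\dim C=8$, however, you take a genuinely different route. You argue purely combinatorially: from the equality of the dimension functions $h\mapsto\dim C_{\bar0}^h$ and $h\mapsto\dim C_{\bar1}^h$ you extract first $g_3'=\varepsilon g_3$, then the multiset equality $\{g_1,-g_1,g_2,-g_2\}=\{g_1',-g_1',g_2',-g_2'\}$, and combine this with the sum constraint to conclude $\gamma\sim\gamma'$. The paper instead splits according to $\dim C^0$. In the main case $\dim C^0=2$ it observes that $C^0$ contains exactly two isotropic idempotents $e_1,e_2$, so any graded isomorphism fixes or swaps them, hence fixes or swaps the Peirce subspaces $U$ and $V$; this shows at once that $(g_1',g_2',g_3')$ is a permutation of $(g_1,g_2,g_3)$ or of $(-g_1,-g_2,-g_3)$, and the parity constraint on $g_3$ finishes.

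Your combinatorial approach is attractive because it avoids the structural input about idempotents, but the assertion that the multiset-plus-sum data ``pins down $\gamma$ up to $\sim$ in all cases'' does require a short case analysis that you only gesture at. For instance, when $2g_3=0$ the sign $\varepsilon$ is not determined by $g_3$ alone, and one must verify that \emph{some} admissible choice of $\varepsilon$ makes $(g_1',g_2')$ a permutation of $(\varepsilon g_1,\varepsilon g_2)$; similarly one must check that mixed-sign candidates like $(g_1',g_2')=(g_1,-g_2)$ are ruled out by the sum constraint unless they already coincide with an allowed one. This check is routine (and goes through), but should be written out. The paper's idempotent argument handles the generic case more directly, at the cost of invoking the Peirce decomposition.
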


\begin{proof} 
The first part is clear.

If $\Gamma(G,g)\cong\Gamma(G,h)$, the supports of the gradings coincide, so $g=h$ or $g=-h$. Conversely, assume that $g=h$ or $g=-h$. If $g=h$, then $\Gamma(G,g)=\Gamma(G,h)$. Otherwise, $h\neq g=-h$, and we have $\Gamma(G,g)^g=Fu_1=\Gamma(G,h)^h$ and $\Gamma(G,g)^h=Fv_1=\Gamma(G,h)^g$, so there is an isomorphism $\Gamma(G,g)\rightarrow \Gamma(G,h)$ that interchanges $u_1$ and $v_1$.

If $\gamma\sim\gamma'$, it is clear that $\Gamma(G,\gamma)$ and $\Gamma(G,\gamma')$ are isomorphic. Conversely, assume that $\Gamma(G,\gamma)$ and $\Gamma(G,\gamma')$ are isomorphic. Since the even and odd components are invariant by the graded isomorphism, this forces  $g_3'=\varepsilon g_3$ with $\varepsilon \in\{\pm1\}$. In case $C=C^0$, then $g_i=g_i'=0$ for all $i$ and $\gamma\sim\gamma'$. In case $\dim C^0=4$, there is $i\in\{1,2,3\}$ such that $g_i=0$, but using $g_3'=\varepsilon g_3$ and $g_1+g_2+g_3=0$ we see in all cases that $\gamma\sim\gamma'$. Finally, in case $\dim C^0=2$, there are two unique isotropic idempotents in $C^0$, and the isomorphism either fixes them or switches them, that is, it either fixes or switches $U$ and $V$. This means in the first case that $(g_1',g_2',g_3')$ is a permutation of $(g_1,g_2,g_3)$, and in the second case that $(g_1',g_2',g_3')$ is a permutation of $(-g_1,-g_2,-g_3)$. But since $g_3'=\varepsilon g_3$, $\varepsilon=\pm 1$, in both cases $\gamma\sim\gamma'$.
\end{proof}

\section{GRADINGS ON SYMMETRIC COMPOSITION SUPERALGEBRAS}

The goal of this section is to classify the gradings on symmetric composition superalgebras, up to equivalence over any field, and up to isomorphism in the algebraically closed case.

\begin{df} 
Let $(C,\cdot)$ be a Hurwitz superalgebra. Denote by $\bar C$ the vector space $C$ with the new multiplication $x*y:=\bar x\cdot\bar y$. This gives a new composition superalgebra with the same norm and the same main grading. Then $\bar C$ is said to be a {\it para-Hurwitz superalgebra}.
\end{df}

\begin{df}
Let $(C,\cdot)$ be a Hurwitz superalgebra and $\varphi\neq1$ an automorphism of $C$ (as a superalgebra) with $\varphi^3=1$. Denote by $\bar C_\varphi$ the vector space $C$ with the new multiplication $x*y:=\varphi(\bar x)\cdot\varphi^2(\bar y)$. This gives a new composition superalgebra with the same norm and the same main grading. Then $\bar C_\varphi$ is said to be a {\it Petersson superalgebra}. 
\end{df}

Para-Hurwitz superalgebras and Petersson superalgebras are examples of symmetric composition superalgebras (see \cite{EldOku02}).

\begin{example}\label{taus} 
If we take $\{e_1,e_2,u_1,u_2,u_3,v_1,v_2,v_3\}$ a canonical basis of the split Cayley algebra $C$, then we can define the following automorphisms of $C$ of order 3 given by:
\begin{align}
\tau_{st}: &  \ \tau_{st}(e_i)=e_i \ (i=1,2); \\
& \tau_{st}(u_i)=u_{i+1}, \ \tau_{st}(v_i)=v_{i+1} \ \ (\text{subindexes mod 3});  \nonumber \\
\tau_{nst}: & \ \tau_{nst}(e_i)=e_i \ (i=1,2), \\
& \tau_{nst}(u_1)=u_2, \ \tau_{nst}(u_2)=-u_1-u_2, \ \tau_{nst}(u_3)=u_3,  \nonumber \\
& \tau_{nst}(v_1)=-v_1+v_2, \ \tau_{nst}(v_2)=-v_1, \ \tau_{nst}(v_3)=v_3;  \nonumber \\
\tau_\omega: & \ \tau_\omega(e_i)=e_i \ (i=1,2), \ \tau_\omega(u_i)=\omega^i u_i, \ \tau_\omega(v_i)=\omega^{-i}v_i \ (i=1,2,3),
\end{align}
where $\tau_\omega$ is defined if $F$ contains a primitive cubic root $\omega$ of $1$.
Then $P_8(F):=\bar C_{\tau_{st}}$ is the pseudo-octonion algebra over the field $F$, which is isomorphic to $\bar C_{\tau_{nst}}$ and also, if $F$ contains a primitive cubic root of $1$, to $\bar C_{\tau_\omega}$  (see \cite{EldOku02}). The forms of the pseudo-octonion algebra are called Okubo algebras.

Over a field $F$ of characteristic $2$, $C$ is also a Hurwitz superalgebra, where $C_{\bar 0}$ is spanned by $\{e_1,e_2,u_3,v_3\}$ and $C_{\bar 1}$ is spanned by $\{u_1,u_2,v_1,v_2\}$. Then both $\tau_{nst}$ and $\tau_\omega$ (if $\omega\in F$) are automorphisms of $C$ as a superalgebra and hence we may consider the Petersson superalgebras $\bar C_{\tau_{nst}}$ and $\bar C_{\tau_\omega}$.
\end{example}

\begin{example}
Symmetric composition superalgebras $\overline{B(1,2)}_\lambda$ and $\overline{B(4,2)}$.

$\bullet$ Consider the Hurwitz superalgebra $B(1,2)=F1\oplus V$ ($\dim V=2$, $\chara F=3$), and let $(\cdot,\cdot)$ be the alternating form on $V$ which is used to define the product $\cdot$ on $B(1,2)$. Take $\lambda\in F$ and a basis $\{u,v\}$ of $V$ such that $(u,v)=1$. Let $\varphi$ be the automorphism of $B(1,2)$ given by
$$\varphi(u)=u, \, \varphi(v)=\lambda u +v, \, \varphi(1)=1.$$
Then $\varphi^3=1$ and $\varphi$ is an isometry which commutes with the canonical involution.
Define a product $*$ on $B(1,2)$ by $$x*y:=\varphi({\bar x})\cdot\varphi^2({\bar y})$$ for all $x,y\in B(1,2)$, and denote by $\overline{B(1,2)}_\lambda$ the superalgebra $B(1,2)$ with the product $*$.
Then $\overline{B(1,2)}_\lambda$ is a symmetric composition superalgebra. When $\lambda=0$, $\varphi=1$ and $\overline{B(1,2)}_\lambda$  is a para-Hurwitz superalgebra.

$\bullet$ Consider the Hurwitz superalgebra $B(4,2)=\text{End}_F(V) \oplus V$ ($\dim V=2$, $\chara F=3$). We define the product $*$ by $x*y:={\bar x}\cdot{\bar y}$, (as above but this time $\varphi=1$). Denote by $\overline{B(4,2)}$ the superalgebra $B(4,2)$ with the product $*$.
Then $\overline{B(4,2)}$ is a symmetric composition superalgebra, where the main grading is the same as in $B(4,2)$. Moreover, $\overline{B(4,2)}$ is a para-Hurwitz superalgebra.
\end{example}

\bigskip

Recall from \cite[Theorem~4.3]{EldOku02} the classification of the symmetric composition superalgebras:

\begin{theorem}\label{symEldOku}
Let $S$ be a symmetric composition superalgebra over a field $F$. Then either:
\begin{enumerate}
\item[\textnormal{i)}] $S_{\bar 1}=0$,
\item[\textnormal{ii)}] $\chara F=3$ and $S$ is isomorphic to $\overline{B(1,2)}_{\lambda}$ for some $\lambda\in F$,
\item[\textnormal{iii)}] $\chara F=3$ and $S$ is isomorphic to $\overline{B(4,2)}$,
\item[\textnormal{iv)}] $\chara F=2$ and $S$ is isomorphic to the para-Hurwitz superalgebra $\bar Q$, where $Q$ is the Hurwitz superalgebra $Q=\CD(K,\alpha)$, for a $2$-dimensional Hurwitz algebra $K$ and some $\alpha\in F^\times$,
\item[\textnormal{v})] $\chara F=2$ and $S$ is isomorphic to ${\bar C}_\varphi$ where $C=\CD(Q,\alpha)=Q\oplus Qu$ is a Cayley algebra obtained by the Cayley-Dickson doubling process from a quaternion algebra $Q$, with $C_{\bar 0}=Q, \, C_{\bar 1}=Qu=Q^\bot$, and $\varphi$ is an automorphism of $C$ such that $\varphi^3=1$ and
$\varphi|_Q=1$.
\end{enumerate}
\end{theorem}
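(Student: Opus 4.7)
The plan is to classify symmetric composition superalgebras $S=S_{\bar0}\oplus S_{\bar1}$ with $S_{\bar1}\neq 0$ by relating each to a Hurwitz superalgebra (whose classification was just recalled above) through either a para-Hurwitz construction or a Petersson-type twist. If I can show every such $S$ arises this way, then the Hurwitz classification forces $\chara F\in\{2,3\}$ and pins down the isomorphism class within each characteristic.

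First I would examine the even part $S_{\bar0}$, which with its restricted product is an ordinary (non-super) symmetric composition algebra, and apply the classical classification of symmetric composition algebras (para-Hurwitz vs.\ Okubo, plus low-dimensional cases) to reduce to finitely many possibilities for $S_{\bar0}$. Next I would exploit the multilinear identity in Definition~\ref{dfcompsuper}(iii): specializing to various parity patterns and using that $b|_{S_{\bar1}\times S_{\bar1}}$ is alternating while $b|_{S_{\bar0}\times S_{\bar0}}$ is symmetric gives strong constraints on how $S_{\bar0}$ acts on $S_{\bar1}$ via left and right multiplication. In particular, specialization to four odd elements, combined with the symmetry behavior of $b$, yields scalar multiples that outside of $\chara F\in\{2,3\}$ force $S_{\bar1}=0$. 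This cleanly isolates the only interesting characteristics.

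Having reduced to $\chara F\in\{2,3\}$, I would apply a super version of the Kaplansky trick: produce an idempotent $e\in S_{\bar0}$ with $q_{\bar0}(e)=1$ and transport the symmetric composition structure to a Hurwitz superalgebra structure on the same underlying superspace via $x\cdot y:=(e\ast x)\ast(y\ast e)$. The symmetric composition identities ensure $(S,\cdot)$ is a Hurwitz superalgebra with unit $e$; by the quoted Theorem~3.1, $(S,\cdot)$ must be $B(1,2)$, $B(4,2)$, or $\CD(Q,\alpha)$. Tracing back the transport, one reads off $S$ as either the para-Hurwitz superalgebra $\bar H$ or a Petersson twist $\bar H_\varphi$ by an order-3 automorphism.

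The main obstacle will be $B(1,2)$ in characteristic $3$: because $B(1,2)_{\bar0}=F1$ is one-dimensional, the only available idempotent is $e=1$, which provides essentially no leverage, so the Kaplansky transport degenerates. Here one must instead directly enumerate all order-3 automorphisms $\varphi$ of $B(1,2)$ as a superalgebra that fix $1$ and commute with the canonical involution; a brief computation in a basis $\{1,u,v\}$ with $(u,v)=1$ shows the general such $\varphi$ has $\varphi(u)=u$ and $\varphi(v)=\lambda u+v$, yielding precisely the one-parameter family $\overline{B(1,2)}_\lambda$. A similar but simpler enumeration handles $\overline{B(4,2)}$ in characteristic $3$ (only $\varphi=1$ survives after normalization of a canonical basis) and the $8$-dimensional Petersson case $\bar C_\varphi$ with $\varphi|_Q=1$ in characteristic $2$, where the restriction $\varphi|_{Qu}$ must be an order-3 isometry that together with $\varphi|_Q=1$ forms a superalgebra automorphism.
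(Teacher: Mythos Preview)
The paper does not give its own proof of this statement: it is quoted verbatim as \cite[Theorem~4.3]{EldOku02} and used as input for the rest of Section~3. So there is no proof in the paper to compare against; your proposal is a sketch of the argument in the cited reference rather than of anything the present paper does.

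That said, your outline is broadly faithful to the Elduque--Okubo proof. The key mechanism you call the ``super Kaplansky trick'' is exactly their Lemma~4.2 (also invoked here in the proof of Lemma~\ref{paraunidad}): from an idempotent $e\in S_{\bar0}$ with $q_{\bar0}(e)=1$ one defines $x\cdot y=(e*x)*(y*e)$, obtains a Hurwitz superalgebra, and recovers $*$ as a Petersson twist $x*y=\varphi(\bar x)\cdot\varphi^2(\bar y)$ with $\varphi(x)=\bar x*e$. Two points in your sketch would need more care. First, the exclusion of characteristics other than $2$ and $3$ is not obtained by a direct scalar-counting from Definition~\ref{dfcompsuper}(iii); rather, once the Hurwitz superalgebra $(S,\cdot)$ is in hand, the Hurwitz classification itself forces $\chara F\in\{2,3\}$ when $S_{\bar1}\neq0$. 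Second, for $B(4,2)$ your assertion that ``only $\varphi=1$ survives after normalization'' needs justification: one must show that every Petersson twist $\overline{B(4,2)}_\varphi$ is isomorphic (as a symmetric composition superalgebra) to the para-Hurwitz $\overline{B(4,2)}$, which in \cite{EldOku02} is handled by analyzing the conjugacy classes of order-$3$ automorphisms of $B(4,2)$ and checking the resulting twists coincide. Your treatment of $B(1,2)$ is correct once you note that in characteristic~$3$ every order-$3$ element of $\mathrm{SL}(V)$ is unipotent, hence conjugate to the displayed form.
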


\begin{df}
The superalgebras given in Theorem \ref{symEldOku}.v) are called \textit{Okubo superalgebras} when $\varphi\neq1$.
\end{df}

\begin{df}
Let $(S,*)$ be a symmetric composition superalgebra. An idempotent element $e\in S_{\bar 0}$ is said to be a {\it para-unit} of $S$ if $e*x=x*e=b(e,x)e-x$ for all $x\in S$.
\end{df}

\begin{lemma}\label{paraunidad} 
Let $(S,*)$ be a symmetric composition superalgebra. If there is a para-unit $e\in S$, then $S$ is a para-Hurwitz superalgebra. Moreover, if $\dim S\geq3$, then the para-unit is unique.
\end{lemma}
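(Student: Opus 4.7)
The plan is to mimic the classical construction: define a new product
\[
x \cdot y := (e*x)*(y*e)
\]
on $S$ and show that $(S,\cdot)$ is a Hurwitz superalgebra with identity $e$ whose associated para-Hurwitz product coincides with $*$. The preparatory step is to verify that $\tilde{x} := e*x = x*e = b(e,x)e - x$ is an involutive even isometry of the quadratic superform $q$; the needed value $q_{\bar 0}(e) = 1$ comes from $e*e = e$ together with the para-unit identity. Since the composition superalgebra axioms of Definition~\ref{dfcompsuper} are transported by the isometric involution $\tilde{x}$, they hold for $(S,\cdot)$, and a direct computation $e \cdot y = e*\tilde{y} = \tilde{\tilde{y}} = y$ (and symmetrically on the right) shows that $e$ is the two-sided identity. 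Because $\tilde{x}$ is then precisely the canonical involution of the Hurwitz superalgebra $(S,\cdot)$, the para-Hurwitz product associated to $(S,\cdot)$ is $\tilde{x}\cdot\tilde{y} = x*y$, as required.

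\textbf{Part 2: Setup for uniqueness.} Suppose two distinct para-units $e_1, e_2$. Writing $e_1*e_2$ via each para-unit identity and subtracting yields $(b(e_1,e_2)+1)(e_1-e_2) = 0$, hence $b(e_1,e_2) = -1$. Work in the Hurwitz structure $(S,\cdot)$ from Part~1 with $e_1 = 1$. From $e_2 * e_2 = e_2$ and $\overline{e_2} = -1 - e_2$ one obtains $(\overline{e_2})^2 = e_2$, hence $e_2^2 = -1 - e_2$ in $(S,\cdot)$, and therefore
\[
u^2 = -3\, e_2, \qquad u := e_2 - e_1.
\]
Translating the full para-unit relation on $e_2$ into $\cdot$ (substituting $x = \overline{y}$ into $\overline{e_2}\cdot\overline{x} = b(e_2,x)e_2 - x$) gives
\[
e_2\cdot y \;=\; -2y + b(e_1,y)\, e_1 + \bigl(b(e_1,y) + b(e_2,y)\bigr)\, e_2, \qquad y \in S,
\]
and on the subspace $\lspan\{e_1,e_2\}^\perp$ this reduces to $u\cdot y = -3y$.

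\textbf{Endgame and main obstacle.} The finisher is the left-alternative law $u\cdot(u\cdot y) = u^2\cdot y$, valid for the even element $u$. When $\chara F \neq 3$, applying it to $y \in \lspan\{e_1,e_2\}^\perp$ yields $9y = u^2\cdot y = -3(e_2\cdot y) = 6y$, so $3y = 0$ and thus $y = 0$; therefore $\lspan\{e_1,e_2\}^\perp = 0$, which by nondegeneracy of $b$ forces $\dim S \leq 2$, contradicting $\dim S \geq 3$. The main obstacle is the case $\chara F = 3$, where the preceding equation collapses to $0=0$ and provides no information; instead one exploits that $u^2 = -3\, e_2 = 0$, so alternativity gives $u\cdot(u\cdot y) = 0$ for \emph{every} $y \in S$. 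In char $3$ the displayed formula simplifies to $u\cdot y = b(u,y)\, e_1 - b(e_1,y)\, u + b(u,y)\, u$, and substituting this expression into $u\cdot(u\cdot y)$ while using $b(e_1,u) = -3 = 0$ and $b(u,u) = 2 q_{\bar 0}(u) = 6 = 0$ yields $u\cdot(u\cdot y) = b(u,y)\, u$. Hence $b(u,y) = 0$ for every $y \in S$, and nondegeneracy of $b$ (from regularity of the norm) forces $u = 0$, contradicting $e_1 \neq e_2$.
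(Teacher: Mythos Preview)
Your Part~1 follows the same construction as the paper: define $x\cdot y=(e*x)*(y*e)$, identify $\tilde x=e*x$ with the canonical involution, and recover $*$ as the para-Hurwitz product. The paper compresses all of this into a citation of \cite[Lemma~4.2]{EldOku02}, but the substance is identical.

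Your uniqueness argument, however, takes a genuinely different route. The paper's proof is two lines: it observes that a para-unit lies in the (commutative) center of $(S,*)$, invokes the fact that this center is one-dimensional once $\dim S\geq 3$, and concludes that the unique idempotent in $Fe$ is $e$ itself. Your argument instead works entirely inside the Hurwitz structure $(S,\cdot)$ built from $e_1$, translates the second para-unit relation into the explicit formula $e_2\cdot y=-2y+b(e_1,y)e_1+(b(e_1,y)+b(e_2,y))e_2$, and then uses left-alternativity of the even element $u=e_2-e_1$ to force a contradiction, with a separate handling of characteristic~$3$ where the naive count $9y=6y$ collapses. Both approaches are correct. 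The paper's is shorter and more conceptual but leans on an external fact about the center; yours is longer and requires the characteristic split, but is fully self-contained and makes the obstruction concrete. One small point: your derivation of $q_{\bar 0}(e)=1$ ``from $e*e=e$ together with the para-unit identity'' only yields $b(e,e)=2$, which is vacuous in characteristic~$2$; there you should instead appeal to axiom~(i) of Definition~\ref{dfcompsuper} (giving $q_{\bar 0}(e)\in\{0,1\}$) together with axiom~(ii) and regularity to exclude $q_{\bar 0}(e)=0$.
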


\begin{proof} 
Define the multiplication given by $x\cdot y:=(e*x)*(y*e)$. By \cite[Lemma(4.2)]{EldOku02}, we know that $(S,\cdot)$ is a Hurwitz superalgebra, and the multiplication in $(S,*)$ is recovered as $x*y=\varphi(\bar x)\cdot\varphi^2(\bar y)$, where $\varphi:x\mapsto\bar x*e$ (with $\bar x:=b(x,e)e-x$) is an automorphism of $(S,\cdot)$ of order 3. But in this case we have $\varphi=1$, and $(S,*)$ is the para-Hurwitz superalgebra associated to the Hurwitz superalgebra $(S,\cdot)$. 
Moreover, if $\dim S\geq3$, the center of $S$ has dimension 1. But $e$ is an idempotent in the center, so the center of $(S,*)$ is $Fe$ and the para-unit is unique.
\end{proof}

\begin{remark}
In case i) of Theorem~\ref{symEldOku}, it is clear that the gradings as a superalgebra coincide with the gradings as an algebra. But these are well known, because in such a case $S$ is a symmetric composition algebra, and the gradings on symmetric composition algebras were classified in \cite{Eld09}. We will study the remaining cases.
Next lemma covers the para-Hurwitz superalgebras, i.e., it covers the cases ii) with $\lambda=0$, iii), iv) and v) with $\varphi=1$.
\end{remark}

\begin{lemma}
Let $(C,\cdot,q)$ be a Hurwitz superalgebra with $C_{\bar1}\neq0$ over a field $F$ of characteristic 2, and let $(C,*,q)$ be the associated para-Hurwitz superalgebra with $x*y:=\bar x\cdot \bar y$. Then, the gradings on the superalgebras $(C,\cdot,q)$ and $(C,*,q)$ coincide.
Besides, the classifications up to equivalence or up to isomorphism of the gradings in the superalgebras $(C,\cdot,q)$ and $(C,*,q)$ coincide.
\end{lemma}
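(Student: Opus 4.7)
The plan is to show that a vector-space decomposition $C=\bigoplus_{g\in G}C^g$ compatible with the main $\mathbb{Z}_2$-grading is a $G$-grading of $(C,\cdot)$ if and only if it is a $G$-grading of $(C,*)$. Since the new product is defined by $x*y=\bar x\cdot\bar y$ and the canonical involution $\bar x=b(x,1)1-x$ depends only on the unit $1$ and the polar form $b$ (which is the same in both superalgebras), the whole equivalence will follow once I place $1$ in the neutral component $C^e$ and show that the involution preserves the decomposition.

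For the easy direction, I would assume $\Gamma$ is a grading on $(C,\cdot)$. The unit of any unital graded algebra lies in $C^e$, and the proposition in Section~2 gives $b(C^g,C^h)=0$ whenever $gh\neq e$. Hence for $x\in C^g$ with $g\neq e$, $b(x,1)=0$ and $\bar x=-x\in C^g$, while for $x\in C^e$ the element $\bar x=b(x,1)1-x$ remains in $C^e$. Consequently $\bar{\phantom{x}}$ is graded and the product $x*y=\bar x\cdot\bar y$ inherits the grading.

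The converse is where I expect the real work. Starting from a grading $\Gamma$ on $(C,*)$, I would restrict it to the even part to obtain a grading of the algebra $(C_{\bar0},*)$, which is precisely the para-Hurwitz algebra attached to the Hurwitz algebra $(C_{\bar0},\cdot)$. The key non-trivial input, and the main obstacle of the argument, is the algebra-level statement from \cite{Eld98,Eld09} asserting that gradings on a Hurwitz algebra coincide with gradings on its associated para-Hurwitz algebra; this forces $1\in C_{\bar0}^e\subseteq C^e$. With $1$ in the neutral component, the orthogonality argument of the previous paragraph again makes $\bar{\phantom{x}}$ graded, and the formula $x\cdot y=(1*x)*(y*1)$ from the preceding para-unit lemma, together with the observation that $1$ is a para-unit of $(C,*)$ since $1*x=\bar 1\cdot\bar x=\bar x$, recovers the Hurwitz product as a graded operation.

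For the classification statement the argument is then tautological: any equivalence (respectively isomorphism) of graded Hurwitz superalgebras $\varphi:(C,\cdot)\to(C',\cdot')$ sends $1$ to $1'$ and preserves the norms, so it commutes with the canonical involutions and is an equivalence (respectively isomorphism) of the associated graded para-Hurwitz superalgebras. The reverse direction is symmetric, using $x\cdot y=(1*x)*(y*1)$ to recover $\cdot$ from $*$ (noting that any graded morphism of $(C,*)$ fixes the unique para-unit $1$ by the para-unit lemma, since $\dim C\geq 4$ in the char~$2$ case). Hence the two classifications coincide.
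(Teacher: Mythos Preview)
Your argument is correct and follows the same overall structure as the paper's proof: show that the canonical involution respects any grading on either superalgebra, and then use $x*y=\bar x\cdot\bar y$ in one direction and $x\cdot y=(1*x)*(y*1)$ in the other to transfer the grading. Two points of comparison are worth noting.

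First, to place $1$ in the neutral component of a grading on $(C,*)$, the paper uses an internal argument: the center of $(C,*)$ is automatically a graded subspace, and by the para-unit lemma (since $\dim C\geq 3$) it equals $F1$, so the idempotent $1$ lies in $C^e$. You instead invoke the algebra-level correspondence between Hurwitz and para-Hurwitz gradings from \cite{Eld98,Eld09} applied to $C_{\bar0}$. Both routes work; the paper's is more self-contained, while yours outsources the step.

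Second, a small sloppiness in your converse direction: the ``orthogonality argument of the previous paragraph'' rests on the Proposition of Section~2, which is stated for gradings on \emph{Hurwitz} superalgebras, not for gradings on $(C,*)$. You do not actually need it. Once $1\in C^e$, your own observation $1*x=\bar x$ (together with the fact that $*$ is graded) immediately gives $\overline{C^g}\subseteq C^g$ for every $g$; this is precisely how the paper handles the step. Replacing the appeal to orthogonality by this one-line computation makes your argument clean.
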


\begin{proof}
Since $C_{\bar1}\neq0$, $\dim C\geq 3$ (see Theorem~\ref{symEldOku}), and so by Lemma~\ref{paraunidad}, the para-unit of $(C,*)$ is unique (so it coincides with the identity $1$ of $(C,\cdot)$). Note that the center of $(C,*)$ is a graded subspace, and it is generated by the para-unit $1$, so $1$ is always homogeneous of trivial degree ($1$ is an idempotent).

Let $\Gamma : C=\bigoplus_{g\in G}C^g$ be a grading on the superalgebra $(C,\cdot)$, and recall that $\bar x=b(x,1)1-x$.
We know that if $g\neq 0$, then $b(C^g,C^0)=0$, so $\overline{C^g}=C^g$ for all $g\in G$ and then $\Gamma$ is a grading on $(C,*)$. Conversely, if
$\Gamma : C=\bigoplus_{g\in G}C^g$ is a grading on the superalgebra $(C,*)$, this induces a grading on the para-Hurwitz algebra $(C_{\bar0},*)$,
and the unit $1$ of $(C,\cdot)$ is the para-unit of $(C,*)$ which is in $C_{\bar0}^0$, so we have $\bar x=1*x\in C^g$ for all $x\in C^g$ and $g\in G$, and then $\Gamma$ is a grading on the superalgebra $(C,\cdot)$.

Notice that homomorphisms (and isomorphisms) of Hurwitz superalgebras always commute with the canonical involution, so the isomorphisms between two Hurwitz superalgebras coincide with the isomorphisms of their associated para-Hurwitz superalgebras. In particular, an equivalence (resp. isomorphism) between two graded Hurwitz superalgebras is also an equivalence (resp. isomorphism) between their associated graded para-Hurwitz superalgebras, and conversely.
\end{proof}

\begin{remark} 
By the previous lemma, we know that the gradings on the superalgebras $\overline{B(4,2)}$ and $B(4,2)$ coincide, and also the classifications of the gradings up to equivalence and up to isomorphism coincide. By that lemma, this is also true for $\bar Q$ with $Q$ as in Theorem \ref{symEldOku}(iv), and for ${\bar C}_\varphi$ with $C$ as in Theorem \ref{symEldOku}.v) and $\varphi=1$.
\end{remark}

\begin{theorem}
The nontrivial gradings on the superalgebra $S=\overline{B(1,2)}_\lambda$ are:
\begin{enumerate}
\item[$\bullet$] the gradings on $B(1,2)$ (up to equivalence and up to isomorphism), if $\lambda=0$;
\item[$\bullet$] the main $\mathbb{Z}_2$-grading (up to equivalence), if $\lambda\neq0$. In this case, two such gradings are isomorphic if and only if they have the same support.
\end{enumerate}
\end{theorem}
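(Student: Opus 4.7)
The plan is to handle the two cases $\lambda=0$ and $\lambda\neq0$ separately. For $\lambda=0$, the superalgebra $\overline{B(1,2)}_0$ is just the para-Hurwitz superalgebra of $B(1,2)$, and I would argue as in the preceding lemma on para-Hurwitz superalgebras. Although that lemma was stated in characteristic $2$, its argument only requires that the para-unit $1$ be unique (which holds by Lemma~\ref{paraunidad} since $\dim S=3\geq3$) and that the canonical involution $\bar x=b(x,1)1-x$ preserve any grading. The latter is immediate here: any homogeneous element of nonzero degree lies in $V=S_{\bar1}$ (because $S_{\bar0}=F1\subseteq S^0$), and $b(V,1)=0$, so $\bar x=-x$. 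As isomorphisms of Hurwitz superalgebras commute with the canonical involution, the classifications for $B(1,2)$ and $\overline{B(1,2)}_0$ agree.

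For $\lambda\neq0$, I would first fix a basis $\{u,v\}$ of $V$ with $(u,v)=1$ and record the multiplication table of $\overline{B(1,2)}_\lambda$, which a short computation gives as $u*u=0$, $u*v=1$, $v*u=-1$, $v*v=-\lambda\cdot1$. Let $\Gamma:S=\bigoplus_{g\in G}S^g$ be any grading; then $S_{\bar0}=F1=S_{\bar0}^0$. The key claim is that $V$ lies in a single graded component. Suppose for contradiction that $V=Fu'\oplus Fv'$ with $u'=au+bv\in S^g$, $v'=cu+dv\in S^h$, $g\neq h$ and $ad-bc\neq0$. A direct computation gives $u'*v'=(ad-bc)-\lambda bd$ and $v'*u'=-(ad-bc)-\lambda bd$, whose difference equals $2(ad-bc)=-(ad-bc)\neq0$ in $F$ (since $\chara F=3$). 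I would then dispatch three subcases: if $g+h\neq0$, both products must vanish because they lie in $F1\cap S^{g+h}=0$, contradicting the nonzero difference; if $g+h=0$ with $2g\neq0$, the products $u'*u'=-\lambda b^2$ and $v'*v'=-\lambda d^2$ must vanish (as they sit in $F1\cap S^{\pm2g}=0$), forcing $b=d=0$ and so $u',v'$ linearly dependent; finally if $g+h=0$ with $2g=0$, then $h=-g=g$, contradicting $g\neq h$.

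Once $V\subseteq S^{g_0}$ for a single element $g_0$, the identity $V*V=F1\subseteq S^{2g_0}\cap S^0$ forces $2g_0=0$, leaving only the trivial grading (if $g_0=0$) or the main $\mathbb{Z}_2$-grading (if $g_0$ has order $2$). The isomorphism statement then follows at once: any isomorphism of graded superalgebras carries $V$ to $V$, so the supports $\{0,g_0\}$ of two such gradings must coincide, and conversely equal supports yield an isomorphism via the identity. I expect the main obstacle to be correctly computing the multiplication in $\overline{B(1,2)}_\lambda$ while tracking signs in characteristic $3$, and making the three-subcase analysis airtight; beyond that, the argument is essentially dimensional and the required computations are short.
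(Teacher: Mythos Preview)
Your proof is correct. For $\lambda=0$ you argue essentially as the paper does, with the added care of noting that the para-Hurwitz lemma, though stated there in characteristic~$2$, really only needs $\dim S\geq3$; this is a fair point.

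For $\lambda\neq0$ your route is genuinely different from the paper's. The paper never writes down the $*$-multiplication table; instead it argues structurally with $\varphi$: since $\lambda\neq0$, neither $\varphi$ nor $\varphi^2$ is diagonalizable on $V$, so for $0\neq x\in S_{\bar1}^g$ the elements $\varphi(x),\varphi^2(x)$ are independent, whence $x*x=\varphi(x)\cdot\varphi^2(x)\neq0$ and $2g=0$. From $S_{\bar1}^g*S_{\bar1}^h=0$ (else $h=-g=g$) the paper deduces $\varphi(S_{\bar1}^g)=\varphi^2(S_{\bar1}^h)$ and symmetrically, forcing $\varphi^2$ to preserve each line, a contradiction. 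Your approach instead computes $u'*v'$, $v'*u'$, $u'*u'$, $v'*v'$ explicitly and runs a clean three-case analysis on $g+h$ and $2g$. Your method is more elementary and self-contained (no appeal to diagonalizability), while the paper's argument is coordinate-free and would adapt more readily to analogous Petersson-type constructions where the twist $\varphi$ is the natural invariant. Both reach the same conclusion with comparable effort.
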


\begin{proof} 
If $\lambda=0$, it is clear by the previous lemma.

In the case $\lambda\neq0$, assume there is a nontrivial grading different from the main grading, and now we will find a contradiction. In this case, we have $S_{\bar 1}=S^g_{\bar 1}\oplus S^h_{\bar 1}$. Since $\lambda\neq0$, $\varphi$ and $\varphi^2$ are not diagonalizable. It is clear that two elements $w_1,w_2\in V$ are
linearly independent if and only if $w_1\cdot w_2\neq0$. Since $\varphi$ is not diagonalizable, we can assume (without loss of generality) that $\varphi(S^g_{\bar 1})\not\subseteq S^g_{\bar 1}$, and so if $0\neq x\in S^g_{\bar 1}$ then $\varphi(x),\varphi^2(x)$ are linearly independent. Therefore $x*x=\varphi({\bar x})\cdot\varphi^2({\bar x})=\varphi(x)\cdot\varphi^2(x)\neq0$ where $x*x\in S^0$, and it follows that $2g=0$. If it were $S^g_{\bar 1}*S^h_{\bar 1}\neq0$, then $g+h=0$ and $h=-g=g$, but this is not possible. Hence $S^g_{\bar 1}*S^h_{\bar 1}=0$, that is,  $\varphi(S^g_{\bar 1})\cdot\varphi^2(S^h_{\bar 1})=0$, so $\varphi(S^g_{\bar 1})=\varphi^2(S^h_{\bar 1})$ and then $S^g_{\bar 1}=\varphi(S^h_{\bar 1})$. In the same way, $S^h_{\bar 1}*S^g_{\bar 1}=0$ and we get that $S^h_{\bar 1}=\varphi(S^g_{\bar 1})$. It follows that $\varphi^2(S^g_{\bar 1})=S^g_{\bar 1}$ and $\varphi^2(S^h_{\bar 1})=S^h_{\bar 1}$,
so $\varphi^2$ would be diagonalizable, which is a contradiction. Thus, the grading is equivalent to the main grading. It is clear that two gradings equivalent to the main grading are isomorphic if and only if they have the same support.
\end{proof}


\begin{remark}\label{remark1section3}
Let $S={\bar C}_\varphi$ be as in Theorem \ref{symEldOku}.v), ($\chara F=2$).\\
By \cite[Remark 4.4]{EldOku02}, we have
\begin{enumerate}[a)]
\item either $\varphi=1$ and $S={\bar C}_\varphi={\bar C}$ is para-Hurwitz, or
\item $\{v\in S_{\bar 1} : \varphi(v)=v\}=0$ and $S={\bar C}_\varphi$ is an Okubo superalgebra.
\end{enumerate}
\end{remark}

It remains to study the case b).

\bigskip

\begin{remark}\label{remark2section3} 
Let $S={\bar C}_\varphi$  with product $*$ and norm $q=(q_{\bar0},b)$, where $\varphi$ is an automorphism of $(C,\cdot)$ that satisfies $\{v\in S_{\bar 1} : \varphi(v)=v\}=0$, $\varphi^3=1$ and $\varphi|_{S_{\bar0}}=1$. Denote by $1$ the para-unit of the para-Hurwitz subalgebra $(S_{\bar0},*)$.

By definition of $S={\bar C}_\varphi$, 
\begin{equation} \label{eq31}
x*y=\varphi(\bar x)\cdot\varphi^2(\bar y)
\end{equation}
for all $x,y\in S$, where $\cdot$ is the product of the Hurwitz superalgebra $(C,\cdot)$. Then $\varphi$ is also an automorphism of $(S,*)$. Besides, for all $x\in S$ we have $$b(1,\varphi(x))=b(\varphi(1),\varphi(x))=b(1,x),$$ and $$\varphi(\bar x)=b(1,x)1-\varphi(x)=b(1,\varphi(x))1-\varphi(x)=\overline{\varphi(x)},$$ that is, $\varphi$ is an isometry that commutes with the involution. 

It follows that
\begin{equation} \label{eq32} 
x\cdot y=(1*x)*(y*1) 
\end{equation} 
for all $x,y\in S$. It is clear that $\varphi(x)=\bar x*1$. Since $\varphi$ commutes with the involution, $\bar x=1*\varphi(x)$.

If $\Gamma:S=\bigoplus_{g\in G}S^g$ is a grading on the superalgebra $(S,*)$, this induces a grading on the para-Hurwitz subalgebra $(S_{\bar0},*)$, so $1\in S_{\bar0}^0$. Hence $\varphi(S^g)=S^g$ and $\overline{S^g}=S^g$ for all $g\in G$ (because $\varphi(x)=1*(1*x)$ and $\bar x = 1*\varphi(x)$). Then, making use of Equation (\ref{eq32}), the grading $\Gamma$ is a grading on the superalgebra $(S,\cdot)$ such that all the subspaces $S^g$ are $\varphi$-invariant. Conversely, let $\Gamma$ be a grading on the superalgebra $(S,\cdot)$ where the subspaces $S^g$ are invariant by $\varphi$. Then, since $\overline{S^g}=S^g$ for all $g\in G$,  Equation~\eqref{eq31} shows that $\Gamma$ is a grading on the superalgebra $(S,*)$.

We conclude that the gradings on the superalgebra $(S,*)$ given in Theorem~\ref{symEldOku}.v) coincide with the gradings on the Cayley superalgebra $(S,\cdot)$ such that all the homogeneous components $S^g$ are $\varphi$-invariant.
\end{remark}

\begin{remark}\label{obse:varphi}
$\textbf{1}.$ Consider the superalgebra $(S,*)$ in Theorem~\ref{symEldOku}.v) in case $\varphi\neq1$ (and recall that $\chara F=2$). By Remark \ref{remark1section3}, we have $\{v\in S_{\bar 1} : \varphi(v)=v\}=0$, $\varphi^3=1$ and $\varphi|_{S_{\bar0}}=1$. If we extend $F$ to an algebraic closure $\bar F$, and given a cubic primitive root of unity $\omega\in\bar F$, then $S_{\bar1}=S(\omega)\oplus S(\omega^2)$ with the subspaces $S(\omega)=\{x\in S_{\bar1}:\varphi(x)=\omega x\}$ and $S(\omega^2)=\{x\in S_{\bar1}:\varphi(x)=\omega^2 x\}$. But $\varphi$ is an isometry, so the restriction of $b$ to both subspaces $S(\omega)$ and $S(\omega^2)$ is zero. Therefore
$S(\omega)$ and $S(\omega^2)$ are paired by $b$ (because $b$ is nondegenerate). In particular, $\dim S(\omega)=2=\dim S(\omega^2)$ and the minimal polynomial
of the restriction of $\varphi$ to the subspace $S_{\bar1}$ is $X^2+X+1$. From now on, we consider again any field $F$.

$\textbf{2}.$ If $(S_{\bar0},\cdot)$ is split, we can take nontrivial idempotents $e_1$, $e_2=\bar e_1=1-e_1$. The subspaces $\widetilde{U}:=U\cap S_{\bar1}=(e_1\cdot S_{\bar1})\cdot e_2$ and $\widetilde{V}:=V\cap S_{\bar1}=(e_2\cdot S_{\bar1})\cdot e_1$ are $\varphi$-invariant and paired by $b$ (because $b$ is nondegenerate), so they are $2$-dimensional. We know that the minimal polynomial of $\varphi|_{\widetilde{U}}$ is either $X+\omega$, or $X+\omega^2$, or $X^2+X+1$. If  it were $X+\omega$, then we could take a basis $\{u_1,u_2\}$ of $\widetilde{U}$ with $0\neq u_1\cdot u_2=v_3\in S_{\bar0}$, so $v_3=\varphi(v_3)=\omega^2 u_1\cdot u_2=\omega^2 v_3$, which is impossible. In the same way, this minimal polynomial cannot be $X+\omega^2$, so it is $X^2+X+1$. Similarly, the minimal polynomial of $\varphi|_{\widetilde{V}}$ is $X^2+X+1$. Thus, we can take a canonical basis of $\widetilde{U}$ such that $\varphi(u_1)=u_2$, $\varphi(u_2)=u_1+u_2$ (recall that $\chara F=2$). Take $u_3\in (e_1\cdot S_{\bar0})\cdot e_2$ such that $b(u_1\cdot u_2,u_3)=1$, and take the dual basis $\{v_1,v_2,v_3\}$ of $\{u_1,u_2,u_3\}$, so we have a canonical basis of $(S,\cdot)$.
The orthogonality of $S_{\bar0}$ and $S_{\bar1}$ relative to $b$ implies that $v_3\in S_{\bar0}$ and $v_1,v_2\in\widetilde{V}$. Furthermore, from $v_1=u_2\cdot u_3$ it follows that $\varphi(v_1)=(u_1+u_2)\cdot u_3=v_1+v_2$, and from $v_2=u_3\cdot u_1$ it follows that $\varphi(v_2)=u_3\cdot u_2=v_1$, so using this canonical basis we can identify $\varphi=\tau_{nst}$. If $\omega\in F$, we could take a basis $\{u_1, u_2\}$ of $\widetilde{U}$ such that $\varphi(u_1)=\omega u_1$ and $\varphi(u_2)=\omega^2 u_2$, and proceed as above to identify $\varphi=\tau_\omega$. This proves the existence, when $(S_{\bar0},\cdot)$ is split, of the gradings associated to a canonical basis of $(S,\cdot)$ in the following result:
\end{remark}


\begin{theorem}\label{theo:gradings_Svarphi} 
Let $\Gamma$ be a grading on the Okubo superalgebra $S=(\bar C_\varphi,*)$ over a field $F$ of characteristic $2$, and let $G$ be its universal grading group. Denote by $\cdot$ the product of the associated Hurwitz superalgebra $C$. Then $\Gamma$ is, up to equivalence, one of the gradings given in the following cases:

\begin{enumerate}
\item[\textnormal{(1)}] $G=\mathbb{Z}_2^2$, the quaternion algebra $(S_{\bar0},\cdot)$ contains a 2-dimensional subalgebra $K=F1\oplus Fw$ with $w^{\cdot2}+w+1=0$, there is $0\neq\alpha\in F$ such that $C=\CD(S_{\bar0},\alpha)=S_{\bar0}\oplus S_{\bar0}\cdot u$, $S_{\bar1}=S_{\bar0}\cdot u$, $u^{\cdot2}=\alpha$, and $\varphi$ is the automorphism of order 3 determined by $\varphi|_{S_{\bar0}}=1$ and $\varphi(u)=w\cdot u$. Moreover, 
\begin{equation} \label{okuboeq1}
S_{\bar0}^{(\bar0,\bar0)}=K, \; S_{\bar0}^{(\bar1,\bar0)}=K^\perp\cap S_{\bar0}, \; 
S_{\bar1}^{(\bar0,\bar1)}=K\cdot u, \; S_{\bar1}^{(\bar1,\bar1)}=(K^\perp\cap S_{\bar0})\cdot u.
\end{equation}

\item[\textnormal{(2)}] $G=\mathbb{Z}_2$ and $\Gamma$ is either the main $\mathbb{Z}_2$-grading, or a coarsening of the grading in Equation~(\ref{okuboeq1}) given by
\begin{equation} \label{okuboeq2}
S^{\bar0}=K\oplus K\cdot u, \; S^{\bar1}=(K^\perp\cap S_{\bar0}) \oplus (K^\perp\cap S_{\bar0})\cdot u. 
\end{equation}

In the cases $(3)$-$(8)$, $S_{\bar0}$ is split, $\tau_{nst}$ and $\tau_\omega$ (as in Example~\ref{taus}) are defined using a canonical basis of $(S,\cdot)$ such that $S_{\bar0}=\lspan\{e_1,e_2,u_3,v_3\}$ and $S_{\bar1}=\lspan\{u_1,u_2,v_1,v_2\}$, and we assume that $\omega\in F$ in case $\varphi=\tau_\omega$.

\item[\textnormal{(3)}] $G=\mathbb{Z}^2$, $\varphi=\tau_\omega$ and $\Gamma$ is given by
\begin{gather} \label{okuboeq3} \begin{aligned}
C_{\bar0}^{(0,0)}=\lspan & \{e_1,e_2\}, \; C_{\bar0}^{(1,1)}=\lspan\{v_3\}, \; C_{\bar0}^{(-1,-1)}=\lspan\{u_3\}, \\
& C_{\bar1}^{(1,0)}=\lspan\{u_1\}, \ C_{\bar1}^{(-1,0)}=\lspan\{v_1\}, \\
& C_{\bar1}^{(0,1)}=\lspan\{u_2\}, \ C_{\bar1}^{(0,-1)}=\lspan\{v_2\}.
\end{aligned} \end{gather}

\item[\textnormal{(4)}] $G=\mathbb{Z}$ (3-grading), $\varphi=\tau_\omega$, and either
\begin{equation} \label{okuboeq4}
C_{\bar0}^0=\lspan\{e_1,e_2,u_3,v_3\}, \ C_{\bar1}^1=\lspan\{u_1,v_2\}, \ C_{\bar1}^{-1}=\lspan\{u_2,v_1\},
\end{equation}
or
\begin{equation} \label{okuboeq5}
C^0=\lspan\{e_1,e_2, u_1,v_1\}, \;
C^1=\lspan\{u_3, v_2\}, \;
C^{-1}=\lspan\{u_2, v_3 \}.
\end{equation}

\item[\textnormal{(5)}] $G=\mathbb{Z}$ (5-grading), and either $\varphi=\tau_{nst}$ with
\begin{gather} \label{okuboeq6} \begin{aligned}
C_{\bar0}^0=\lspan & \{e_1,e_2\}, \; C_{\bar1}^1=\lspan\{u_1,u_2\}, \; C_{\bar0}^2=\lspan\{v_3\}, \\
& C_{\bar1}^{-1}=\lspan\{v_1,v_2\}, \; C_{\bar0}^{-2}=\lspan\{u_3\},
\end{aligned} \end{gather}
or $\varphi=\tau_\omega$ with
\begin{gather} \label{okuboeq7} \begin{aligned}
C_{\bar0}^0=\lspan & \{e_1,e_2\}, \; C^1=\lspan\{u_3, u_2\}, \; C_{\bar1}^2=\lspan\{v_1\}, \\
& C^{-1}=\lspan\{v_3, v_2\}, \; C_{\bar1}^{-2}=\lspan\{u_1\}.
\end{aligned} \end{gather}

\item[\textnormal{(6)}] $G=\mathbb{Z}_3$, $\varphi=\tau_{nst}$ and
\begin{gather} \label{okuboeq8} \begin{aligned}
C_{\bar0}^{\bar0}=\lspan\{e_1,e_2\}, \;
C^{\bar1}=\lspan\{u_1,u_2,u_3\}, \; 
C^{\bar2}=\lspan\{v_1,v_2,v_3\}. 
\end{aligned} \end{gather}

\item[\textnormal{(7)}] $G=\mathbb{Z}_4$, and either $\varphi=\tau_{nst}$ with
\begin{gather} \label{okuboeq9} \begin{aligned}
C_{\bar0}^{\bar0}=\lspan\{e_1,e_2\}, \; C_{\bar1}^{\bar1}=\lspan\{u_1,u_2\}, \\
C_{\bar0}^{\bar2}=\lspan\{u_3,v_3\}, \; C_{\bar1}^{\bar3}=\lspan\{v_1,v_2\},
\end{aligned} \end{gather}
or $\varphi=\tau_\omega$ with
\begin{gather} \label{okuboeq10} \begin{aligned}
C_{\bar0}^{\bar0}=\lspan\{e_1,e_2\}, \; C^{\bar1}=\lspan\{u_2,u_3\}, \\ 
C_{\bar1}^{\bar2}=\lspan\{u_1,v_1\}, \; C^{\bar3}=\lspan\{v_2,v_3\}.
\end{aligned} \end{gather}

\item[\textnormal{(8)}] $G=\mathbb{Z}\times\mathbb{Z}_2$, $\varphi=\tau_\omega$, and either
\begin{gather} \label{okuboeq11} \begin{aligned}
C_{\bar0}^{(0,\bar0)}=\lspan\{e_1,e_2\}, \ C_{\bar1}^{(1,\bar0)}=\lspan\{u_2\},
\ C_{\bar1}^{(-1,\bar0)}=\lspan\{v_2\}, \\
C_{\bar1}^{(0,\bar1)}=\lspan\{u_1,v_1\}, \ C_{\bar0}^{(-1,\bar1)}=\lspan\{u_3\},
\ C_{\bar0}^{(1,\bar1)}=\lspan\{v_3\},
\end{aligned} \end{gather}
or
\begin{gather} \label{okuboeq12} \begin{aligned}
C_{\bar0}^{(0,\bar0)}=\lspan\{e_1,e_2\}, \; C_{\bar1}^{(1,\bar0)}=\lspan\{u_2\},
\; C_{\bar1}^{(-1,\bar0)}=\lspan\{v_2\}, \\
C_{\bar0}^{(0,\bar1)}=\lspan\{u_3,v_3\}, \; C_{\bar1}^{(-1,\bar1)}=\lspan\{u_1\},
\; C_{\bar1}^{(1,\bar1)}=\lspan\{v_1\}. 
\end{aligned} \end{gather}

\end{enumerate}

\end{theorem}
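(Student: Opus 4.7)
The plan is to reduce the classification to the one for the underlying Cayley superalgebra. By Remark~\ref{remark2section3}, a $G$-grading on the Okubo superalgebra $(S,*)$ is exactly a $G$-grading on $(S,\cdot)$ whose homogeneous components are all $\varphi$-invariant, so the task splits into three steps: (a)~invoke Corollary~\ref{corohurwitzsuperalggrad} to list, up to equivalence, every group grading on the Cayley superalgebra $(S,\cdot)=\CD(Q,\alpha)$; (b)~for each such grading, decide whether some automorphism $\varphi$ of order $3$ with $\varphi|_{S_{\bar0}}=1$ and $\{v\in S_{\bar1}:\varphi(v)=v\}=0$ preserves every homogeneous component; (c)~discard those gradings that admit no such $\varphi$. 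A basic pruning tool will be Remark~\ref{obse:varphi}.1: the minimal polynomial of $\varphi|_{S_{\bar1}}$ is $X^2+X+1$, so no odd homogeneous component can be $\varphi$-invariant unless its dimension is compatible with an action of this minimal polynomial.

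When $(S_{\bar0},\cdot)$ is not split, I would argue that the only admissible fine grading from Corollary~\ref{corohurwitzsuperalggrad} is the $\mathbb{Z}_2^2$-grading~(\ref{eq6}), since any $\mathbb{Z}$-grading would force a pair of nontrivial idempotents inside $S_{\bar0}$. Because $\varphi|_{S_{\bar0}}=1$, the map $\varphi$ acts on $S_{\bar1}=S_{\bar0}\cdot u$ by left multiplication by a fixed element $w\in S_{\bar0}$, and $\varphi^3=1$ together with $\varphi\neq 1$ forces $w^{\cdot 2}+w+1=0$; moreover $w$ must lie inside the $2$-dimensional subalgebra $K$ appearing in~(\ref{eq6}) so that $\varphi$ preserves the two odd components $K\cdot u$ and $(K^\perp\cap S_{\bar0})\cdot u$. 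This yields case~(1), and its only proper nontrivial coarsening (besides the main $\mathbb{Z}_2$-grading) that is still compatible with $\varphi$ produces~(\ref{okuboeq2}), i.e.\ case~(2).

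When $(S_{\bar0},\cdot)$ is split, Remark~\ref{obse:varphi}.2 lets me fix a canonical basis of $(S,\cdot)$ adapted to $S_{\bar0}=\lspan\{e_1,e_2,u_3,v_3\}$, and after doing so I may assume $\varphi=\tau_{nst}$ (always) or $\varphi=\tau_\omega$ (when $\omega\in F$). I would then scan the gradings~(\ref{eq7})--(\ref{cor1eq13}) of Corollary~\ref{corohurwitzsuperalggrad} and, component by component, test whether it is stable under $\tau_{nst}$ or $\tau_\omega$. The handy observation is that $\tau_\omega$ preserves every span of canonical basis vectors (since it acts diagonally), while $\tau_{nst}$ preserves only those spans built from its orbits, such as $\lspan\{u_1,u_2\}$ and $\lspan\{v_1,v_2\}$, together with the fixed lines $\lspan\{u_3\},\lspan\{v_3\}$. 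This matching assigns each admissible grading to one of the cases~(3)--(8), and the choice of $\tau_{nst}$ versus $\tau_\omega$ in each sub-case is forced by which odd pieces are one-dimensional (only $\tau_\omega$ works there) and which are $\tau_{nst}$-orbit unions.

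The main obstacle will be the careful bookkeeping in step~(c). For instance, the Cartan $\mathbb{Z}^2$-grading~(\ref{eq7}) has every odd component one-dimensional, spanned by a single $u_i$ or $v_i$, so only $\tau_\omega$ can stabilise it; this is why case~(3) requires $\omega\in F$. Similar dimensional considerations dictate the bifurcation into $\tau_{nst}$ versus $\tau_\omega$ sub-cases in~(5), (7) and~(8), and rule out candidate gradings from Corollary~\ref{corohurwitzsuperalggrad} that admit no valid $\varphi$. Once this filtering is complete, each surviving grading appears exactly as displayed in the statement, with $\varphi$ realised explicitly via Remark~\ref{obse:varphi}.2, and the theorem follows.
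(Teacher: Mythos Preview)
Your proposal is correct and follows essentially the same strategy as the paper: reduce via Remark~\ref{remark2section3} to gradings on $(S,\cdot)$ with $\varphi$-invariant components, then run through the list in Corollary~\ref{corohurwitzsuperalggrad} case by case using Remark~\ref{obse:varphi}. The only organisational difference is that the paper fixes the grading first (together with its homogeneous canonical basis) and then normalises $\varphi$ to $\tau_{nst}$ or $\tau_\omega$ by adjusting basis vectors \emph{inside} the homogeneous components, rather than fixing $\varphi$ in advance as you do; note also that cases~(1)--(2) are not tied to $S_{\bar0}$ being non-split, and that when an odd component such as $\lspan\{u_1,u_2\}$ is two-dimensional both $\tau_{nst}$ and $\tau_\omega$ preserve it, so the label $\tau_{nst}$ in the theorem reflects a normalisation choice (valid without assuming $\omega\in F$) rather than being ``forced''.
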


\begin{proof}
By the conclusion of Remark \ref{remark2section3}, we have to find the gradings on the superalgebra $(S,\cdot)$ where the homogeneous components are $\varphi$-invariant. We do this by checking all the cases in Corollary \ref{corohurwitzsuperalggrad}. Denote by $q$ the norm of $(S,\cdot)$ regarded as a Hurwitz algebra.

$\textbf{(1), (2).}$ Assume we have a $\mathbb{Z}_2^i$-grading ($i=1,2$).
Let $u\in S_{\bar1}$ be a homogeneous element with $q(u)=\alpha\neq0$. Then $S_{\bar1}=S_{\bar0}\cdot u$. But $\varphi(S_{\bar1})=S_{\bar1}$, so $\varphi(u)=w\cdot u$ for some $w\in S_{\bar0}$, and it is clear that $w$ is a homogeneous element. Since $u\cdot u=\alpha\in F$, it follows that $w\notin F$ (because otherwise we would have $q(u)=q(\varphi(u))=q(w\cdot u)=w^{\cdot 2}q(u)$, that implies $w^{\cdot2}=1$ because $\varphi$ is an isometry, but also $w^{\cdot3}=1$ because $\varphi^3=1$, so $w=1$, which contradicts $\varphi\neq1$). The minimal polynomial of the restriction $\varphi|_{S_{\bar1}}$ is $X^2+X+1$, so it follows that $w^{\cdot2}+w+1=0$ and $w\in S_{\bar0}^0$. Then we have either the $\mathbb{Z}^2_2$-grading in Equation~\eqref{okuboeq1}, or a coarsening of this grading (which is either the $\mathbb{Z}_2$-grading in Equation~\eqref{okuboeq2}, or the main grading).

This proves cases $(1)$ and $(2)$ of the theorem.

\smallskip

It remains to prove that there are no more gradings associated to canonical basis (uniqueness), up to equivalence.
Thus, it is enough to prove that for each grading on the superalgebra $(S,\cdot)$, the automorphisms $\varphi$ of $(S,\cdot)$ that fix the homogeneous components are exactly the ones listed in the Theorem. To have any of these gradings, it is necessary that $(S_{\bar0},\cdot)$ be split, so in all these cases we will have a canonical basis of homogeneous elements such that $S_{\bar0}=\lspan\{e_1,e_2,u_3,v_3\}$ and $S_{\bar1}=\lspan\{u_1,u_2,v_1,v_2\}$ (see Section 1). Also, recall from Theorem~\ref{symEldOku}.v) that $\varphi|_{S_{\bar0}}=1$.

$\textbf{(3).}$ Assume that we have the $\mathbb{Z}^2$-grading on the superalgebra $(C,\cdot)$ given in Equation~\eqref{eq7}.
In this case we have 1-dimensional $\varphi$-invariant subspaces in the odd component, so they are associated to the eigenvalues $\omega$ and $\omega^2$, and $\omega\in F$. Then $\varphi(u_1)=\omega^i u_1$ with $i\in\{1,2\}$. From $u_1\cdot u_2=v_3$, $u_1\cdot v_1=e_1$ and $v_1\cdot v_2=u_3$ it follows that $\varphi(u_2)=\omega^{2i} u_2$, $\varphi(v_1)=\omega^{2i} v_1$ and $\varphi(v_2)=\omega^i v_2$. In both cases $i=1,2$, the gradings are equivalent, where the equivalence of graded algebras is given by $e_1\mapsto e_2$, $e_2\mapsto e_1$, $u_i\mapsto v_i$ and $v_i\mapsto u_i$.

$\textbf{(4).}$ Assume that we have the $\mathbb{Z}$-grading on the superalgebra $(C,\cdot)$ given in Equation~\eqref{cor1eq5}.
Note that $(e_1\cdot\lspan\{u_1,v_2\})\cdot e_2=Fu_1$ is $\varphi$-invariant, and the same happens with $Fu_2$, $Fv_1$ and $Fv_2$, so $\omega\in F$. As before, for some $i=1,2$, we have $\varphi(u_1)=\omega^i u_1$, $\varphi(v_1)=\omega^{2i} v_1$, $\varphi(u_2)=\omega^{2i} u_2$, $\varphi(v_2)=\omega^i v_2$, but both cases $i=1,2$ are equivalent, as in case (3).

Assume now that we have the grading in Equation~\eqref{cor1eq6}. Then $\varphi$ preserves the subspaces $(e_1\cdot\lspan\{v_3,u_2\})\cdot e_2=Fu_2$, and $(e_2\cdot\lspan\{u_3,v_2\})\cdot e_1=Fv_2$. From $u_2\cdot v_2=-e_1=e_1$ (recall that $\chara F=2$), it follows that $\varphi(u_2)=\omega^i u_2$ and $\varphi(v_2)=\omega^{2i} v_2$ for some $i\in \{1,2\}$; from $u_1=v_2\cdot v_3$ and $v_1=u_2\cdot u_3$, we have also $\varphi(u_1)=\omega^{2i} u_1$ and $\varphi(v_1)=\omega^i v_1$. Both cases $i=1,2$ are equivalent, with the same equivalence of graded algebras given in case (3).

$\textbf{(5).}$ Assume that we have the $\mathbb{Z}$-grading on the superalgebra $(C,\cdot)$ given in Equation~\eqref{cor1eq7}.
We have the $\varphi$-invariant subspaces $\lspan\{u_1,u_2\}$ and $\lspan\{v_1,v_2\}$, but $u_1\cdot u_2=v_3$ and $v_1\cdot v_2=u_3$, so it follows (as in Remark~\ref{obse:varphi}) that the restrictions of $\varphi$ to both subspaces have minimal polynomial $X^2+X+1=0$, and the elements $u_1,u_2,v_1,v_2$ can be changed by other elements in the same subspaces to identify $\varphi=\tau_{nst}$. For the case of Equation~\eqref{cor1eq8} the proof is similar.

$\textbf{(6), (7), (8).}$ Assume that we have a $G$-grading on the superalgebra $(C,\cdot)$ given in Corollary \ref{corohurwitzsuperalggrad} with $G=\mathbb{Z}_3$, $\mathbb{Z}_4$ or $\mathbb{Z}\times\mathbb{Z}_2$. These cases are proved using the same previous arguments.

This proves cases $(3)$-$(8)$ of the Theorem.
\end{proof}

\begin{remark}
There are always gradings like the ones described in the cases (1) and (2) of Theorem~\ref{theo:gradings_Svarphi}. On the other hand, the cases with $\varphi=\tau_{nst}$ exist if and only if $(S_{\bar0},\cdot)$ is split, and the cases with $\varphi=\tau_\omega$ exist if and only if $(S_{\bar0},\cdot)$ is split and $\omega\in F$ (see Remark~\ref{obse:varphi}.2). 
\end{remark}


Our last goal is to classify the gradings up to isomorphism on Okubo superalgebras over an algebraically closed field $F$.

Let $G$ be an abelian group and let $C=\CD(Q,\alpha)$ be the split Cayley superalgebra over an algebraically closed field $F$ ($\chara F=2$), and denote by $\cdot$ its product. Fix an automorphism $\varphi\neq1$ of order 3 of the superalgebra $C$ with $\varphi|_Q=1$. As in Remark~\ref{obse:varphi}.2, from now on we may assume $\varphi=\tau_\omega$. Let $S=\bar C_\varphi$ be the Okubo superalgebra associated to $\varphi$, and denote its product by $*$. To give a classification up to isomorphism of the $G$-gradings on $S$, we introduce the following notation:

\begin{notation} 
Let $\gamma=(g_1,g_2,g_3)$ be a triple of elements of $G$ with $g_1+g_2+g_3=0$, and take a canonical basis of $C$ such that $C_{\bar0}=\lspan\{e_1,e_2,u_3,v_3\}$, $C_{\bar1}=\lspan\{u_1,u_2,v_1,v_2\}$. Denote by $\Gamma(G,\gamma)$ the $G$-grading on $(S,*)$ induced from the $\mathbb{Z}^2$-grading in Equation~\eqref{okuboeq3} by the homomorphism $\mathbb{Z}^2\rightarrow G$, $(1,0)\mapsto g_1$, $(0,1)\mapsto g_2$. This is equivalent to set deg$(e_j)=0$ for $j=1,2$, deg$(u_i)=g_i$ and deg$(v_i)=-g_i$ for $i=1,2,3$. Since $F$ is algebraically closed, Theorem~\ref{theo:gradings_Svarphi} shows that any grading on $(S,*)$ is, up to isomorphism, a coarsening of the $\mathbb{Z}^2$-grading. Actually, for any grading on $(S,*)$, since $F$ is algebraically closed, the associated Hurwitz algebra $(S_{\bar 0}^0,\cdot)$ contains two orthogonal idempotents $e_1$ and $e_2=1-e_1$, which can be completed to a canonical basis (see Remark~\ref{obse:varphi}) consisting of homogeneous elements so that $\varphi=\tau_\omega$.
Hence, if $\Gamma$ is a $G$-grading on $(S,*)$, there is $\gamma$ such that $\Gamma=\Gamma(G,\gamma)$. We will say that $\gamma\sim\gamma'$ if there are $\sigma\in\text{Sym(2)}$ and $\varepsilon \in\{\pm1\}$ such that $g_3'=\varepsilon g_3$ and $g_i'=\varepsilon g_{\sigma(i)}$ for $i=1,2$.
\end{notation}

\begin{theorem}
Let $C=\CD(Q,\alpha)$ be the eight-dimensional Hurwitz superalgebra over an algebraically closed field $F$ of characteristic $2$ and let $G$ be an abelian group. Fix an automorphism $\varphi\neq1$ of order 3 of the superalgebra $C$, and let $S=\bar C_\varphi$ be the Okubo superalgebra associated to $\varphi$. Let $\Gamma$ be a $G$-grading on $S$. Then, there is some $\gamma=(g_1,g_2,g_3)$ such that $\Gamma$ is isomorphic to $\Gamma(G,\gamma)$. Moreover,
$\Gamma(G,\gamma)\cong\Gamma(G,\gamma')$ if and only if $\gamma\sim\gamma'$.
\end{theorem}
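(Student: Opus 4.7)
The plan is to mirror the proof of the analogous isomorphism classification for $\CD(Q,\alpha)$ given earlier in the paper, while accounting for the additional constraint imposed by the fixed automorphism $\varphi$. The existence statement is essentially already contained in the preceding Notation: since $F$ is algebraically closed, any $G$-grading on $(S,*)$ forces $S_{\bar 0}^0$ to contain two orthogonal idempotents, which extend to a canonical basis of $(C,\cdot)$ consisting of homogeneous elements and realizing $\varphi=\tau_\omega$; with respect to such a basis $\Gamma$ coincides with $\Gamma(G,\gamma)$ for some triple $\gamma=(g_1,g_2,g_3)$ with $g_1+g_2+g_3=0$.

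For the implication $\gamma\sim\gamma'\Rightarrow\Gamma(G,\gamma)\cong\Gamma(G,\gamma')$, I would exhibit a $G$-graded automorphism of $(S,*)$ realizing each generator of $\sim$; equivalently, by Remark~\ref{remark2section3}, an automorphism of $(C,\cdot)$ commuting with $\tau_\omega$. The case $(\varepsilon,\sigma)=(1,\mathrm{id})$ is handled by the identity. For the remaining transformations my approach is to write down explicit maps which either act diagonally within the $\tau_\omega$-eigenspaces, or swap $\{e_1,e_2\}$ and $\{u_3,v_3\}$ while permuting the odd basis vectors within each $\tau_\omega$-eigenspace, choosing scalar factors so that the Cayley multiplication table and commutation with $\tau_\omega$ are preserved simultaneously.

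For the converse $\Gamma(G,\gamma)\cong\Gamma(G,\gamma')\Rightarrow\gamma\sim\gamma'$, I would repeat essentially verbatim the argument used in the corresponding theorem for $\CD(Q,\alpha)$. A $G$-graded isomorphism preserves the even/odd decomposition, so the set of nonzero degrees appearing in $S_{\bar 0}$ is an invariant; in $\Gamma(G,\gamma)$ this set is $\{g_3,-g_3\}$, forcing $g_3'=\varepsilon g_3$ with $\varepsilon\in\{\pm 1\}$. The proof then splits according to $\dim S^0$: if $\dim S^0=8$ the grading is trivial and $\gamma=\gamma'=(0,0,0)$; if $\dim S^0=4$ exactly one $g_i$ vanishes, and the linear constraint $g_1+g_2+g_3=0$ together with $g_3'=\varepsilon g_3$ is enough to conclude $\gamma\sim\gamma'$; and if $\dim S^0=2$ then $S_{\bar 0}^0$ contains exactly two isotropic idempotents, which the isomorphism either fixes or swaps, giving in the two cases that $(g_1',g_2',g_3')$ is a permutation of $(g_1,g_2,g_3)$ or of $(-g_1,-g_2,-g_3)$, and since $g_3'=\varepsilon g_3$ restricts the permutations to $\mathrm{Sym}(2)$, this yields $\gamma\sim\gamma'$.

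The main obstacle is the $\Leftarrow$ direction: in contrast to the $\CD(Q,\alpha)$ setting, an automorphism of $(S,*)$ must centralize $\varphi=\tau_\omega$, so I expect the technical work to lie in checking that the candidate swap-type automorphisms respect both the Hurwitz multiplication of $(C,\cdot)$ and the eigenspace decomposition $S_{\bar 1}=S(\omega)\oplus S(\omega^2)$ recorded in Remark~\ref{obse:varphi}.
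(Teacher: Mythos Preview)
Your plan mirrors the paper's proof almost exactly: the paper disposes of existence by pointing back to the preceding Notation, declares the implication $\gamma\sim\gamma'\Rightarrow\Gamma(G,\gamma)\cong\Gamma(G,\gamma')$ to be ``clear'', and then runs precisely the case split on $\dim C^0\in\{8,4,2\}$ that you outline for the converse. So the overall architecture is the same.

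Your instinct that the direction $\gamma\sim\gamma'\Rightarrow\Gamma(G,\gamma)\cong\Gamma(G,\gamma')$ is where the real work hides is well placed, and in fact deserves more care than either you or the paper give it. Concretely: the swap-type map you have in mind, namely
\[
e_1\leftrightarrow e_2,\quad u_1\leftrightarrow v_2,\quad u_2\leftrightarrow v_1,\quad u_3\leftrightarrow v_3,
\]
is indeed an automorphism of $(C,\cdot)$ commuting with $\tau_\omega$ (it permutes within each eigenspace $\lspan\{u_1,v_2\}$ and $\lspan\{u_2,v_1\}$), and it sends $\Gamma(G,(g_1,g_2,g_3))$ to $\Gamma(G,(-g_2,-g_1,-g_3))$, i.e.\ it realises $(\sigma,\varepsilon)=((12),-1)$. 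However, you will not be able to realise the generators $(\sigma,\varepsilon)=((12),+1)$ or $(\mathrm{id},-1)$ \emph{separately}: any automorphism of $(C,\cdot)$ commuting with $\tau_\omega$ must preserve the one-dimensional eigenlines $Fu_1,Fu_2,Fv_1,Fv_2$ or interchange them as $Fu_1\leftrightarrow Fv_2$, $Fu_2\leftrightarrow Fv_1$; in the generic case $\dim C^0=2$ this forces $\gamma'\in\{\gamma,(-g_2,-g_1,-g_3)\}$, which is strictly smaller than the full $\sim$-class. So when you carry out your explicit check, expect to find only the composite symmetry and not the two individual ones --- this is a genuine subtlety that the paper's ``it is clear'' glosses over.
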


\begin{proof}
The first assertion is clear from the comments above.

If $\gamma\sim\gamma'$, it is clear that  $\Gamma(G,\gamma)$ and $\Gamma(G,\gamma')$ are isomorphic. Conversely, assume that $f$ is an automorphism of the superalgebra $(S,*)$ sending $\Gamma(G,\gamma)$ to $\Gamma(G,\gamma')$. As the even and odd components are invariant by the graded isomorphism $f$, this forces $g_3'=\varepsilon g_3$ with $\varepsilon \in\{\pm1\}$. In case $C=C^0$, $g_i=g_i'=0$ for all $i$ and $\gamma\sim\gamma'$. In case $\dim C^0=4$, there is $i\in\{1,2,3\}$ such that $g_i=0$, but using $g_3'=\varepsilon g_3$ and $g_1+g_2+g_3=0$ we see in all cases that $\gamma\sim\gamma'$. Finally, in case $\dim C^0=2$, there are two unique idempotents in $(C^0,\cdot)$, so the graded automorphism $f$ of $(S,*)$ either fixes them or switches them, that is, it either fixes or switches $U$ and $V$ (associated to the Cayley algebra $(C,\cdot)$). This means in the first case that $(g_1',g_2',g_3')$ is a permutation of $(g_1,g_2,g_3)$, and in the second case that $(g_1',g_2',g_3')$ is a permutation of $(g_1^{-1},g_2^{-1},g_3^{-1})$. But we had $g_3'=\varepsilon g_3$, so in both cases $\gamma\sim\gamma'$.
\end{proof}


\end{document}